 \newtheorem{thm}{Theorem}[section]
 \newtheorem{cor}[thm]{Corollary}
 \newtheorem{lem}[thm]{Lemma}
 \theoremstyle{definition}
 \theoremstyle{remark}
 \numberwithin{equation}{section}
 \newcommand{\tr}{\textbf{tr}}
 \newcommand{\ric}{\textbf{Rc}}
 \newcommand{\Rm}{\textbf{Rm}}
\newcommand{\e}{\epsilon}
\begin{document}

\title[Warped Product Splitting Through Weak KAM]{A Warped Product Splitting Theorem Through Weak KAM Theory}

\author{Paul W.Y. Lee}
\email{wylee@math.cuhk.edu.hk}
\address{Room 216, Lady Shaw Building, The Chinese University of Hong Kong, Shatin, Hong Kong}

\date{\today}

\begin{abstract}
In this paper, we strengthen the splitting theorem proved in \cite{LiWa,LiWa2} and provide a different approach using ideas from the weak KAM theory.
\end{abstract}

\maketitle


\section{Introduction}

The classical splitting theorem of Cheeger-Gromoll states that if a manifold with non-negative Ricci curvature contains a line, then it is isometric to a product of a submanifold with the real line equipped with the product metric. Here a line means a geodesic which is the image of the real line and each segment of this geodesic is minimizing between its end-points.

In \cite{LiWa,LiWa2} (extending earlier works in \cite{WiYa,CaGa,Wa}), a version of the theorem for negative Ricci curvature is proved for manifolds of dimension greater than two (and we assume this condition for the rest of the paper). Here is the statement.
\begin{thm}
Let $M$ be a Riemannian manifold with the Ricci curvature bounded below by $-\frac{\lambda_1(n-1)}{n-2}$, where $\lambda_1$ is the first eigenvalue of the Laplace-Beltrami operator. Then one of the following two holds
\begin{enumerate}
\item $M$ has at most one infinite volume end,
\item $M$ is isometric to the warped product $\mathbb{R}\times N$ with metric given by $$dt^2+\cosh^2\left(t\sqrt{\frac{\lambda}{n-2}}\right)g_N,$$ where $N$ is a compact submanifold of $M$ and $g_N$ is the Riemannian metric of $N$ induced by that of $M$.
\end{enumerate}
\end{thm}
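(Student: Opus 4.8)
The plan is to prove the contrapositive: assuming that $M$ has at least two ends of infinite volume, I will produce the rigid warped product. Write $\kappa := \lambda_1/(n-2)$, so that the hypothesis reads $\Rc \ge -(n-1)\kappa$ and the target metric is $dt^2 + \cosh^2(t\sqrt{\kappa})\,g_N$. The first step is to manufacture a line. Two distinct infinite-volume ends $E^+$ and $E^-$ are separated by some compact set, so minimizing geodesics joining points that run to infinity in $E^+$ and in $E^-$ must cross it, and a limiting argument yields a complete geodesic $\sigma \colon \Real \to M$ each of whose segments is minimizing, with its two rays escaping into $E^+$ and $E^-$. Let $b^+$ and $b^-$ be the Busemann functions of these two rays. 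In the language of weak KAM theory for the geodesic (Maupertuis) Lagrangian, $b^+$ and $b^-$ are conjugate weak KAM solutions of the eikonal equation $|\nabla u| = 1$: $b^+$ is calibrated by the forward ray, $b^-$ by the backward ray, $b^+ + b^- \ge 0$ with equality along $\sigma$, and the Aubry set $\mathcal{A} = \{\, b^+ + b^- = 0 \,\}$ is non-empty, closed, invariant, foliated by lines, and is precisely the locus where $b^{\pm}$ are $C^{1,1}$ with $\nabla b^+ = -\nabla b^-$.

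The second step injects the spectral hypothesis. The Laplacian comparison theorem gives, in the barrier sense, $\Delta b^{\pm} \le (n-1)\sqrt{\kappa}$. Since the roots of $r^2 + (n-1)\sqrt{\kappa}\,r + \lambda_1$ are exactly $-\sqrt{\kappa}$ and $-(n-2)\sqrt{\kappa}$ — this is the algebraic identity forcing the precise curvature bound $-(n-1)\kappa$ — the positive functions $e^{-\sqrt{\kappa}\,b^{\pm}}$ and $e^{-(n-2)\sqrt{\kappa}\,b^{\pm}}$ are weak subsolutions of $\Delta f + \lambda_1 f = 0$. Because $\lambda_1$ is the bottom of the $L^2$-spectrum of $-\Delta$, there is a smooth positive solution $\phi_1$ of $\Delta \phi_1 + \lambda_1 \phi_1 = 0$, the generalized ground state; performing the ground-state (Doob) transform $u \mapsto u/\phi_1$ replaces $\Delta + \lambda_1$ by the drift Laplacian $L = \Delta + 2\nabla \log \phi_1 \cdot \nabla$, which carries no zeroth-order term, so that the maximum principle becomes available. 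Comparing the $\lambda_1$-subsolutions built from $b^{\pm}$ against $\phi_1$ — which, after the transform, is a comparison of $L$-subharmonic functions, run with the usual care on the finite-volume and hence $L$-parabolic ends — forces the barrier inequalities to be saturated, equivalently $\mathcal{A} = M$. One thus obtains a globally smooth function $b := b^+ = -b^-$ with $|\nabla b| \equiv 1$, no cut locus, level sets smooth compact hypersurfaces, and gradient flow lines globally minimizing geodesics with no focal points.

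With $b$ at hand the rest is rigidity and reconstruction. The trace Bochner formula gives $0 = \frac{1}{2}\Delta |\nabla b|^2 = |\hess b|^2 + \langle \nabla b, \nabla \Delta b \rangle + \Rc(\nabla b, \nabla b)$; along the unit-speed flow lines of $\nabla b$, this together with the Kato inequality $|\hess b|^2 \ge (\Delta b)^2/(n-1)$ and $\Rc(\nabla b, \nabla b) \ge -(n-1)\kappa$ yields the Riccati inequality $(\Delta b)' \le -(\Delta b)^2/(n-1) + (n-1)\kappa$. Since both ends of $M$ have infinite volume, the area of the level set $\{b = c\}$ tends to infinity as $c \to +\infty$ and as $c \to -\infty$, so there is a minimal level set $N := \{b = 0\}$, and comparing the Riccati inequality issued from $N$ against the model solution on either side — using that the normal geodesics have no focal points — forces $\Delta b = (n-1)\sqrt{\kappa}\tanh(\sqrt{\kappa}\,b)$, hence $\hess b = \sqrt{\kappa}\tanh(\sqrt{\kappa}\,b)\,(g - db \otimes db)$, precisely the second fundamental form of the level sets in the model. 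Flowing by $\nabla b$ from $N$ identifies $(M,g)$ isometrically with $(\Real \times N,\, dt^2 + \cosh^2(t\sqrt{\kappa})\,g_N)$, and $N$ is compact since otherwise this warped product would have at most one end. (When $\lambda_1 = 0$ the whole scheme degenerates to the Cheeger--Gromoll splitting.)

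\textbf{Main obstacle.} The crux is the second step, where the barrier-sense Laplacian comparisons must be promoted to genuine equalities and the rigidity carried from the a priori thin Aubry set to all of $M$. Before rigidity is known, the $b^{\pm}$ carry cut loci and are merely viscosity objects, so every comparison, every invocation of the maximum principle, and the ensuing Bochner computation must first be justified in the weak sense; and one genuinely needs both relevant ends to have infinite volume, because the finite-volume ends are $L$-parabolic and must be controlled separately before the maximum principle for the drift Laplacian applies. This is exactly where the weak KAM framework — supplying the Aubry set, the calibrated-curve foliation, the Peierls barrier, and the attendant one-sided regularity of weak KAM solutions — takes over from, and improves upon, the potential-theoretic ``$\lambda_1$-nonparabolic end'' construction of Li--Wang, and it is what permits the purely metric hypothesis of infinite volume in place of a spectral condition imposed on each individual end.
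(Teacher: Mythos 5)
Your sketch is a reformulation of the Li--Wang argument in weak KAM vocabulary, not a reproduction of the approach in this paper. The paper attacks Theorem~\ref{main0} with the \emph{mechanical} Lagrangian $L(x,v)=\tfrac12|v|^2+g^{(2n-2)/(n-2)}(x)$: the weak KAM solution $F$ solves $\tfrac12|\nabla F|^2=g^{(2n-2)/(n-2)}$, the ``line'' is an $L$-minimizer (not a geodesic), and the Riccati comparison along $L$-minimizers is run directly on the ordinary Hessian of $F$ so that $F$ is shown to be genuinely super-harmonic for the plain Laplacian. There is no Busemann function, no ground-state transform, and no drift Laplacian: harmonicity of $F$ follows from the strong maximum principle applied to $F+G$, and rigidity is read off from the equality case in the matrix Riccati inequality. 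You, by contrast, use the geodesic Lagrangian, Busemann functions $b^\pm$ with $|\nabla b^\pm|=1$, $\lambda_1$-subsolutions $e^{-\sqrt{\kappa}\,b^\pm}$, and a Doob transform by the ground state $\phi_1$ --- this is essentially the potential-theoretic machinery of Li--Wang and Wang, renamed. The vocabulary swap (Aubry set, calibrated curves, Peierls barrier) does not change the analytic content.

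Beyond the difference in route, there is a genuine gap at the step where you claim that comparing the $\lambda_1$-subsolutions against $\phi_1$ ``forces the barrier inequalities to be saturated, equivalently $\mathcal A=M$.'' First, the Aubry set is $\{b^++b^-=0\}$, and under $\Rc\ge -(n-1)\kappa$ you only get $\Delta(b^++b^-)\le 2(n-1)\sqrt{\kappa}$ in the barrier sense --- not superharmonicity --- so no direct strong maximum principle propagates $b^++b^-=0$ off the line; this is precisely where the Cheeger--Gromoll argument breaks for negative lower curvature bounds and why Li--Wang's proof is so much longer. Second, after the ground-state transform the comparison is between an $L$-subharmonic function ($L=\Delta+2\nabla\log\phi_1\cdot\nabla$) and a constant, and on a noncompact manifold a maximum principle of this form requires growth/decay control at infinity in \emph{every} end; you supply none. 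Third --- and most importantly --- the infinite-volume hypothesis is never used in a load-bearing way. The whole content of Li--Wang's proof is the machinery relating infinite-volume ends to $\lambda_1$-nonparabolicity (equivalently, the existence of barriers at infinity) and then running a delicate $L^2$-integral estimate along level sets of the Busemann function; your phrase ``run with the usual care on the finite-volume and hence $L$-parabolic ends'' conceals exactly the hard part of the argument. Without those estimates the comparison step has no conclusion, and the sketch does not distinguish between the $\cosh$-rigid case and the $\exp$-model with one finite-volume end, which the theorem must exclude. As written, the proposal is not a proof; it is an accurate high-level map of what a proof must do, with the decisive analytic step left as ``the maximum principle applies.''
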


The above theorem says nothing about the situation when $M$ has one infinite and one finite volume end. In fact, the manifold $\mathbb{R}\times N$ equipped with the warped product metric $dt^2+\exp(2t)g_N$ is such an example. This example appeared in the same paper \cite{LiWa}. The followings is the main theorem in \cite{LiWa2} which deals with finite volume ends though with a different and stronger condition.

\begin{thm}
Let $M$ be a Riemannian manifold with the Ricci curvature bounded below by $-(n-1)$ and $\lambda_1\geq\frac{(n-1)^2}{4}$. Then one of the following two holds
\begin{enumerate}
\item $M$ has at most one end,
\item $M$ is isometric to the warped product $\mathbb{R}\times N$ with metric given by $$dt^2+\exp\left(2t\right)g_N,$$ where $N$ is a compact submanifold of $M$ and $g_N$ is the Riemannian metric of $N$ induced by that of $M$.
\end{enumerate}
\end{thm}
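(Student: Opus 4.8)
\emph{Proof strategy.} The plan is to assume that $M$ has at least two ends and to produce the warped product in (2). Since $\Rc\geq-(n-1)$, Cheng's inequality forces $\lambda_1=\frac{(n-1)^2}{4}$, and this critical value is realized by a positive ground state $\phi>0$ solving $\Delta\phi+\frac{(n-1)^2}{4}\phi=0$ on $M$. Choose a ray $\gamma$ escaping to infinity in one end, with Busemann function $\beta$. The first idea is to exploit the weak KAM regularity of $\beta$: as a Busemann function it is locally semiconcave, is differentiable with $|\nabla\beta|=1$ along its calibrating minimizing rays (which are complete geodesics), is $C^{1,1}$ along them, and its singular set is closed and null; this is the non-compact weak KAM picture that will replace the ad hoc barrier arguments of the classical proofs. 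Combining it with the Laplacian comparison theorem for $\Rc\geq-(n-1)$, passed to the Busemann limit, gives the one-sided bound $\Delta\beta\leq n-1$ in the barrier sense.

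The decisive step is the reverse bound $\Delta\beta\geq n-1$. Put $w=e^{-\frac{n-1}{2}\beta}$; using $|\nabla\beta|=1$ and $\Delta\beta\leq n-1$ one checks that $w$ is a positive (barrier) subsolution of $\Delta+\frac{(n-1)^2}{4}$, with $w\to0$ along $\gamma$. Setting $v=w/\phi$, the ground-state substitution gives $\mathrm{div}(\phi^2\,\nabla v)\geq0$, so $v$ is a positive $\phi^2$-subharmonic function on $M$. It is exactly here that the \emph{second} end is used: it is what prevents $v$ from being a nonconstant positive subharmonic function decaying at a single end of the weighted manifold $(M,\phi^2)$, and a maximum-principle comparison then forces $v\equiv\text{const}$, i.e.\ $w=c\,\phi$. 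Hence $w$ is itself a solution, all preceding inequalities collapse to equalities, $\Delta\beta\equiv n-1$ and $|\nabla\beta|\equiv1$, and by elliptic regularity $\beta\in C^{\infty}(M)$.

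Now apply Bochner to $\beta$. From $|\nabla\beta|^{2}\equiv1$ and $\Delta\beta\equiv n-1$ one gets $0=\frac12\Delta|\nabla\beta|^{2}=|\nabla^{2}\beta|^{2}+\Rc(\nabla\beta,\nabla\beta)$. Since $\nabla^{2}\beta(\nabla\beta,\cdot)=\frac12\nabla|\nabla\beta|^{2}=0$, the Hessian acts only on the $(n-1)$-plane orthogonal to $\nabla\beta$, with trace $n-1$, so Cauchy--Schwarz yields $|\nabla^{2}\beta|^{2}\geq n-1$; together with $\Rc(\nabla\beta,\nabla\beta)\geq-(n-1)$, every one of these is an equality, whence $\nabla^{2}\beta=g-d\beta\otimes d\beta$ and $\Rc(\nabla\beta,\nabla\beta)=-(n-1)$. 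The integral curves of $\nabla\beta$ are complete unit-speed geodesics foliating $M$, along which $\beta$ increases at unit rate, so $M\cong\Real\times N$ with $N=\beta^{-1}(0)$; then $\mathcal L_{\nabla\beta}g=2\nabla^{2}\beta=2(g-d\beta\otimes d\beta)$ integrates to $g=d\beta^{2}+e^{2\beta}g_{N}$, and $\Real\times N$ has two or more ends only when $N$ is compact, giving (2). (One may instead run a Bochner/refined-Kato estimate on a bounded harmonic function supplied by Li--Tam, tested against $\lambda_1$; the weaker rigidity $\psi''=\psi$ it produces leaves a $\cosh$-warped possibility, discarded via the spectral bound, or via Theorem~1.1, which applies since $\lambda_1\geq n-2$.)

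The main obstacle is the lower bound $\Delta\beta\geq n-1$: making rigorous the comparison between the a priori merely Lipschitz Busemann function and the ground state — this is where the weak KAM regularity of $\beta$ is indispensable — and, above all, understanding $\beta$ near the second end well enough to turn ``$M$ has a second end'' into the failure of one-sided decay that the weighted maximum principle demands. Softer but still to be checked are the properness of the calibrated foliation, so that $M\cong\Real\times N$ and $N$ is compact, and the bookkeeping when the competing ends all carry finite volume rather than the asymmetric configuration of the model $d\beta^{2}+e^{2\beta}g_{N}$ itself.
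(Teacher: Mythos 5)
A preliminary remark: the statement you are proving is Theorem~1.2 of the paper, which the paper does not prove — it is quoted from Li--Wang \cite{LiWa2}. The paper's own contribution is the strengthened Theorems~\ref{main0} and~\ref{main}, proved via genuine weak KAM machinery. So the natural comparison is with the proof of Theorem~\ref{main}.

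Your route is genuinely different from the paper's, and the difference is the crux. You take the Busemann function $\beta$ of a ray (i.e.\ a weak KAM solution of the eikonal equation $|\nabla\beta|=1$, free Lagrangian $L=\tfrac12|v|^2$), apply Laplacian comparison to get $\Delta\beta\le n-1$, perform the ground-state substitution $w=e^{-\frac{n-1}{2}\beta}$, $v=w/\phi$, and then want a Liouville theorem for the positive $\phi^2$-subharmonic function $v$. The paper instead uses the mechanical Lagrangian $L(x,v)=\tfrac12|v|^2+g^{\frac{2n-2}{n-2}}(x)$ with non-constant potential, builds a weak KAM solution $F$ of $\tfrac12|\nabla F|^2=g^{\frac{2n-2}{n-2}}$ via the Lax--Oleinik semigroup, shows through a matrix Riccati comparison (Lemma~\ref{griccati}, Corollary~\ref{riccati}, Theorem~\ref{superharmonic}) that $F$ is \emph{super-harmonic outright}, and uses the minimizing curve/second-end data to produce a second weak KAM solution $G$ with $F+G\ge0$ touching zero; the strong maximum principle then gives $F\equiv -G$, hence $F$ harmonic. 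The reason the paper reaches for the non-trivial potential is precisely to make the analogue of $\beta_++\beta_-$ super-harmonic: with the free Lagrangian, $\Delta\beta_\pm\le n-1$ (not $\le0$), so the classical Cheeger--Gromoll two-Busemann-function argument fails, and the ground-state substitution does not turn $\beta_++\beta_-$ into a $\phi^2$-superharmonic object in any obvious way. This is exactly the structural obstruction your sketch runs into.

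The concrete gap — which you flag yourself — is the sentence ``a maximum-principle comparison then forces $v\equiv\text{const}$.'' A positive $\phi^2$-subharmonic function on a non-compact weighted manifold is in general not constant; you need a weighted parabolicity/Liouville statement, or uniform boundary control at \emph{every} end. You have $w\to0$ along the ray $\gamma$, but no control whatsoever on $v=w/\phi$ at the second end (indeed $\phi$ itself is not controlled there), so a maximum-principle comparison cannot close. Merely asserting ``the second end prevents $v$ from being nonconstant'' is circular; turning the existence of a second end into the required Liouville theorem is where Li--Wang expend most of their effort, via Li--Tam theory of ends and integral estimates with cutoff functions, and it is what the paper replaces by the superharmonicity of $F$ plus $F+G\ge0$ with a touching point. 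Once $\Delta\beta\equiv n-1$ and $|\nabla\beta|\equiv1$ are granted, your Bochner/rigidity computation is correct and is essentially the same as Section~\ref{SecRigid} of the paper, written in Busemann coordinates rather than in the flow coordinates of $\nabla F$. Two smaller items to check if you pursue this: (i) you should take the ray into an \emph{infinite-volume} end so that the Busemann sublevel sets are compact and $N=\beta^{-1}(0)$ is compact; and (ii) the claim that $\mathbb R\times N$ with metric $dt^2+e^{2t}g_N$ has $\ge2$ ends only when $N$ is compact needs a short justification.
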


In this paper, we prove the following theorems strengthening the above results. Below the function $L:TM\to \mathbb{R}$ is the Lagrangian defined by
\[
L(x,v) = \frac{1}{2}|v|^2+g^{\frac{2n-2}{n-2}}(x).
\]

\begin{thm}\label{main0}
Let $g$ be a positive function which satisfies $\Delta g\leq -\lambda g$ for some positive $\lambda$. Assume that
\begin{enumerate}
\item the Ricci curvature bounded below by $-\frac{\lambda(n-1)}{n-2}$,
\item there is a curve $\gamma:\mathbb{R}\to M$ such that, for all real numbers $a<b$,
$\gamma\Big|_{[a,b]}$ is a minimizer of the following minimization problem
\[
\inf_{\sigma(a)=\gamma(a),\sigma(b)=\gamma(b)}\int_{a}^b L(\sigma(t),\dot\sigma(t))\,dt,
\]
\item $\int_\mathbb{R}L(\gamma(t),\dot\gamma(t))dt$ is finite.
\end{enumerate}
Then $M$ is isometric to the manifold $\mathbb{R}\times N$ equipped with the warped product metric
$$dt^2+\cosh^2\left(t\sqrt{\frac{\lambda}{n-2}}\right)g_N,$$
where $N$ is a submanifold of $M$, and $g_N$ is the Riemannian metric induced by that of $M$.
\end{thm}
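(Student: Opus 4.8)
The plan is to transplant the Cheeger--Gromoll splitting argument into the Lagrangian setting, replacing the Busemann functions of a line by weak KAM solutions of the Hamilton--Jacobi equation of $L$. Hypotheses (2)--(3) say that $\gamma$ is a globally minimizing (static) curve for $L$ of finite total action; since $L(x,v)\ge g^{\frac{2n-2}{n-2}}(x)>0$, conservation of energy together with finiteness of $\int_{\mathbb R}L(\gamma,\dot\gamma)\,dt$ forces the energy of $\gamma$ to vanish, so $\inf_M g=0$, the Ma\~n\'e critical value is $c=0$, and the relevant equation is $\tfrac12|\nabla u|^{2}=g^{\frac{2n-2}{n-2}}$. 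Writing $\Phi$ for the critical, free-time Ma\~n\'e potential --- so that $\Phi(\gamma(a),\gamma(b))=\int_a^b L(\gamma,\dot\gamma)\,dt$ for $a<b$ by hypothesis (2) --- I would form the Busemann-type functions of $\gamma$,
\[
u^{+}(x)=\lim_{\tau\to+\infty}\big(\Phi(\gamma(0),\gamma(\tau))-\Phi(x,\gamma(\tau))\big),\qquad
u^{-}(x)=\lim_{\tau\to+\infty}\big(\Phi(\gamma(-\tau),\gamma(0))-\Phi(\gamma(-\tau),x)\big),
\]
the limits existing by the monotonicity coming from sub-additivity of $\Phi$ and the calibration of $\gamma$. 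The standard weak KAM package then gives: $u^{\pm}$ are locally Lipschitz, locally semiconcave, viscosity solutions of the equation above, smooth along $\gamma$ and along their calibrating curves, and $u^{+}+u^{-}\le 0$ on $M$ with equality along $\gamma$.

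The heart of the proof is a Laplacian comparison making $u^{\pm}$ subharmonic in the barrier sense, with a quantitative bound. At a point where $\Phi(\cdot,q)$ is smooth its Hessian obeys a matrix Riccati equation along the minimizing connection to $q$; combining this with the Bochner identity for $\tfrac12|\nabla u^{\pm}|^{2}=g^{\frac{2n-2}{n-2}}$, the Ricci bound $\Rc\ge-\frac{\lambda(n-1)}{n-2}$, the refined trace inequality $|\nabla^{2}u^{\pm}|^{2}\ge\mu^{2}+\tfrac{1}{n-1}(\Delta u^{\pm}-\mu)^{2}+g^{-\frac{2n-2}{n-2}}\,|\nabla^{\perp}(g^{\frac{2n-2}{n-2}})|^{2}$ (with $\mu$ the eigenvalue of $\nabla^{2}u^{\pm}$ in the $\nabla u^{\pm}$ direction and $\nabla^{\perp}$ the component orthogonal to $\nabla u^{\pm}$), and the constraint $\tfrac12|\nabla u^{\pm}|^{2}=g^{\frac{2n-2}{n-2}}$ used to replace $\Delta\big(g^{\frac{2n-2}{n-2}}\big)$ via $\Delta g\le-\lambda g$ (with $|\nabla g|^{2}$ re-expressed through $|\nabla u^{\pm}|^{2}$), one obtains that $u^{\pm}$ is a barrier subsolution of $\Delta u^{\pm}\ge-\Psi$, where $\Psi$ depends only on $g,\nabla g,\Delta g$ and $|\nabla u^{\pm}|^{2}=2g^{\frac{2n-2}{n-2}}$, is $\le 0$ precisely because of the Ricci bound (1) and $\Delta g\le-\lambda g$, and vanishes identically for the model metric $dt^{2}+\cosh^{2}\!\big(t\sqrt{\lambda/(n-2)}\big)g_N$. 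Since $u^{+}$ and $u^{-}$ solve the same Hamilton--Jacobi equation, $\Psi$ is common to them, so $u^{+}+u^{-}$ is barrier subharmonic, $\le 0$, and attains the interior maximum $0$ along $\gamma$; the strong maximum principle gives $u^{+}+u^{-}\equiv 0$. Then $u:=u^{+}=-u^{-}$ satisfies $\Delta u\ge-\Psi$ and $\Delta u\le\Psi$ in the barrier sense, which forces $\Psi\equiv 0$, $u$ harmonic, and every inequality above an equality.

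From the equality cases I read off the warped product: $\Rc(\nabla u,\nabla u)=-\frac{\lambda(n-1)}{n-2}|\nabla u|^{2}$, $\Delta g=-\lambda g$, $g$ (hence $g^{\frac{2n-2}{n-2}}=\tfrac12|\nabla u|^{2}$) is constant on the level sets of $u$, and $\nabla^{2}u$ restricted to $(\nabla u)^{\perp}$ is a multiple of the metric. Introducing $t$ with $dt=du/|\nabla u|=du/\big(\sqrt2\,g^{\frac{n-1}{n-2}}\big)$ (well defined since $|\nabla u|$ is constant on level sets) gives $|\nabla t|=1$ and $\nabla^{2}t=\tfrac{\psi'}{\psi}(g_M-dt\otimes dt)$ for a one-variable function $\psi$, which is exactly the statement that $M$ is the warped product $\mathbb R\times N$, $N=\{u=0\}$, with metric $dt^{2}+\psi(t)^{2}g_N$. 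Feeding $g=g(t)$ into $\Delta g=-\lambda g$ yields $g''+(n-1)\tfrac{\psi'}{\psi}g'=-\lambda g$, and together with $\Rc(\partial_t,\partial_t)=-(n-1)\psi''/\psi=-\frac{\lambda(n-1)}{n-2}$ this gives $\psi''=\tfrac{\lambda}{n-2}\psi$, so $\psi$ is convex; since the orbit $\gamma$ is bi-infinite and has zero energy, $\int g^{-\frac{n-1}{n-2}}\,dt$ diverges at both ends of the range of $t$, which excludes the one-sided exponential solutions and, with completeness of $M$, forces $\psi(t)=\cosh\!\big(t\sqrt{\lambda/(n-2)}\big)$ and $t\in\mathbb R$ (after translating $t$ and scaling $g$). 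This is the asserted isometry.

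The genuine obstacle is the Laplacian comparison of the second step: proving the bound $\Delta_x\Phi(x,q)\le\Psi(x)+o(1)$ uniformly as $q$ escapes along $\gamma$, in the barrier/viscosity sense forced by the mere semiconcavity of $u^{\pm}$, and arranging the Bochner and Riccati bookkeeping so that $\Rc\ge-\frac{\lambda(n-1)}{n-2}$ and $\Delta g\le-\lambda g$ combine into a right-hand side of exactly the correct sign with the $\cosh$ warped product as equality case --- this is precisely where the exponent $\frac{2n-2}{n-2}$ in $L$ and the constant $\frac{\lambda(n-1)}{n-2}$ enter, and any mismatch of these would break the argument. A secondary difficulty is that the compactness usually exploited in weak KAM theory is absent here, so the existence and finiteness of the Busemann-type limits, and the completeness input in the reconstruction step, must be drawn from hypothesis (3) rather than from properness of $L$.
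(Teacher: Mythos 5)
Your proposal matches the paper's proof in both structure and substance: it forms Busemann--type weak KAM solutions from the two ends of the calibrated curve $\gamma$ (the paper's $F$ and $G$, built via the Lax--Oleinik semigroup rather than the free-time Ma\~n\'e potential, but these coincide here since energy conservation and $\int_{\mathbb R}L(\gamma,\dot\gamma)\,dt<\infty$ force the critical value to be $0$), establishes one-sided Laplacian control via the same Riccati/Bochner mechanism with the Ricci bound and $\Delta g\le-\lambda g$ (the paper carries this out as a matrix Riccati equation along minimizers together with a blow-up comparison in Section 4, handling the semiconcave regularity via Alexandrov's theorem), applies the strong maximum principle to $F+G$ to get harmonicity, and reads off the $\cosh$ warping from the equality case. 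Your account of the final reconstruction step and the exclusion of the one-sided exponential solutions via bi-infiniteness of $\gamma$ fills in a step the paper leaves implicit ("the rest follows as in the proof of Theorem 1.4"), but is consistent with what the Riccati ODE in Section 6 gives.
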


\begin{thm}\label{main}
Let $g$ be a positive function which satisfies $\Delta g\leq -\lambda g$ for some positive $\lambda$. Assume that
\begin{enumerate}
\item the Ricci curvature bounded below by $-\frac{\lambda(n-1)}{n-2}$,
\item the infimum $\inf_{\gamma(0)=x}\int_{-\infty}^T
L(\gamma(t),\dot\gamma(t))\,dt$ is finite,
\item there is a sequence of minimizers $\gamma_i:(-\infty,T_i]\to M$ of the above problem with $T$ replaced by $T_i$ such that $\gamma_i(0)$ stays bounded as $i\to \infty$.
\end{enumerate}
Then $M$ is isometric to the manifold $\mathbb{R}\times N$ equipped with the warped product metric
\[
dt^2+\exp\left(2t\sqrt{\frac{\lambda}{n-2}}\right)g_N,
\]
where $N$ is a submanifold of $M$, and $g_N$ is the Riemannian metric induced by that of $M$.
\end{thm}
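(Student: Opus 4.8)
The plan is to use weak KAM theory to extract from hypotheses (2)--(3) a global function $u$ playing the role of a Busemann function for the mechanical system $(M,L)$, and then to run a Bochner/Riccati comparison in which the bounds $\Rc\ge-\frac{\lambda(n-1)}{n-2}$ and $\Delta g\le-\lambda g$ are arranged to be simultaneously sharp. After a time translation take $T=0$. Along any Euler--Lagrange trajectory of $L$ the energy $E=\frac12|v|^2-g^{\frac{2n-2}{n-2}}(x)$ is constant, and since the action density equals $E+2g^{\frac{2n-2}{n-2}}\ge E$, finiteness of the actions in (2)--(3) forces $E\equiv0$ and $g\to0$ along each minimizer as the parameter tends to $-\infty$. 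Hence $|\dot\gamma_i(0)|^2=2g^{\frac{2n-2}{n-2}}(\gamma_i(0))$ is bounded by (3), and after passing to a subsequence the $\gamma_i$ converge in $C^\infty_{\mathrm{loc}}$ to a calibrated curve $\bar\gamma:(-\infty,0]\to M$ of zero energy and finite action. Taking $u$ to be the Lax--Oleinik/Mañé value $u(x)=\inf\{\int_{-\infty}^0 L(\sigma,\dot\sigma)\,dt:\sigma(0)=x\}$ (normalized at $\bar\gamma(0)$), the standard weak KAM theory shows that $u$ is locally Lipschitz, is a viscosity solution of $\frac12|\nabla u|^2=g^{\frac{2n-2}{n-2}}$ (the relevant Mañé critical value being $0$, which is where (2) enters), and that through every $x$ passes a calibrated ray $\sigma_x:(-\infty,0]\to M$ with $\sigma_x(0)=x$ along which $u$ is smooth, $\dot\sigma_x=\nabla u$, and $u$ is exactly calibrated. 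These rays are the integral curves of $\nabla u$ and are the mechanical analogue of the rays underpinning the classical splitting theorem.

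Next I would establish rigidity along the rays. Fix a calibrated ray $\sigma$; along it $u$ is smooth and $|\nabla u|^2=2g^{\frac{2n-2}{n-2}}>0$, so the level sets of $u$ carry a smooth shape operator with mean curvature $m$ in the direction $\nu=\nabla u/|\nabla u|$. Differentiating the Hamilton--Jacobi equation and using the curvature commutation identity yields a Riccati-type equation along $\sigma$ for $m$ with two forcing terms: a curvature term bounded below by (1), and a term built from $\hess(g^{\frac{2n-2}{n-2}})$ bounded above via $\Delta g\le-\lambda g$. Combined with the trace inequality $|\hess u|^2\ge\frac{1}{n-1}(\Delta u-\hess u(\nu,\nu))^2+\hess u(\nu,\nu)^2$ and with the fact that $\sigma$ is defined for all parameters down to $-\infty$ while $g\to0$ there and the action is finite (the data produced in the previous step), this sandwiches $m$ between two model quantities that coincide, namely $-(n-1)\sqrt{\lambda/(n-2)}$. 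Hence every inequality used is an equality along every ray, and --- since the rays sweep out $M$ --- everywhere on $M$: $\Rc(\nabla u,\nabla u)\equiv-\frac{\lambda(n-1)}{n-2}|\nabla u|^2$, $\Delta g\equiv-\lambda g$, the level sets of $u$ are totally umbilic, $|\nabla u|$ vanishes nowhere so $u$ has empty singular set, and the shape operator of the level sets is $\frac{m}{n-1}$ times the identity with $m$ satisfying the model ODE; equivalently $M$ admits a codimension-one spherical foliation.

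Integrating the flow of $\nu$ (or invoking Hiepko's theorem characterizing warped products) then identifies $M$ with $\mathbb{R}\times N$, $N$ a level set of $u$, carrying a metric $dt^2+\phi(t)^2 g_N$; umbilicity together with the model ODE for $m$ (forced by the Ricci equality) gives $\phi''/\phi=\lambda/(n-2)$, and the one-sidedness built into (2)--(3) --- in contrast with the bi-infinite minimizer of Theorem \ref{main0}, which instead yields the even, $\cosh$-type solution --- rules out the $\cosh$-branch and pins down $\phi(t)=\exp\big(t\sqrt{\lambda/(n-2)}\big)$ up to the orientation of $t$, which is the asserted warped product.

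The main obstacle is the rigidity step. Because (2)--(3) give calibrated rays in only one direction, one cannot symmetrize two Busemann functions and apply the strong maximum principle as in Cheeger--Gromoll; the rigidity must be squeezed out of a single Riccati inequality, whose behaviour as the parameter tends to $-\infty$ along the ray is controlled precisely because the total action is finite and $g\to0$ there --- which is exactly why the hypotheses demand finite action and bounded minimizing base points rather than merely the existence of a minimizer. One must also (i) secure enough regularity of the viscosity solution $u$ and the emptiness of its singular set to license the Bochner computation and globalize the pointwise equalities, and (ii) track carefully the extra forcing term $\hess(g^{\frac{2n-2}{n-2}})$: it is exactly the requirement that this term and the curvature term combine so that the two bounding model quantities agree that forces the exponent $\frac{2n-2}{n-2}$ in $L$ and the numerical relation between the curvature bound $-\frac{\lambda(n-1)}{n-2}$ and the spectral bound $-\lambda$.
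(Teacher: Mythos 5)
Your setup (the backward value function $u$, critical value $0$, calibrated curves, and a Riccati/Bochner inequality combining the Ricci lower bound with $\Delta g\le-\lambda g$) matches the paper's Sections 2--4, and that part of the plan is sound: it yields that $u$ is superharmonic in the weak sense. The genuine gap is your rigidity step. You explicitly discard the two-function symmetrization and claim that a single Riccati inequality along each calibrated ray ``sandwiches'' the mean curvature of the level sets at the model value $-(n-1)\sqrt{\lambda/(n-2)}$. But the inequality one can actually derive here is one-sided: comparison with the explicitly solvable model ODE (using $g\to0$ along the ray and infinite backward existence) rules out only one sign --- if $\Delta u>0$ at a point of twice differentiability the comparison solution blows up in finite time, whereas if $\Delta u<0$ the comparison solution merely decays and no contradiction arises. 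So nothing in your argument excludes strict superharmonicity, and the asserted coincidence of two bounding model quantities is not established; ``every inequality is an equality'' does not follow. The missing reverse inequality is exactly where hypothesis (3) must do real work: the paper forms a second weak KAM solution $G=\lim_{T}S_T(-F)$, shows $F+G\ge 0$ and that $F+G$ is superharmonic, and uses the sequence of minimizers with bounded base points to produce a point where $F+G=0$; the strong maximum principle then forces $F\equiv-G$, hence $F$ is harmonic and $C^\infty$, and only after that do the equality cases of the Riccati computation give umbilicity, $\nabla g$ parallel to $\nabla F$, and the warped-product ODE. In other words, the symmetrization-plus-maximum-principle step you declare impossible is precisely how the proof goes (with $S_T(-F)$ playing the role of the opposite Busemann function), and using (3) only to bound $|\dot\gamma_i(0)|$ and extract a limiting ray does not replace it.

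A secondary issue: you propose to run the Bochner computation along rays where $u$ is smooth and then ``globalize since the rays sweep out $M$,'' but before harmonicity $u$ is only locally semiconcave, and the emptiness of its singular set is a consequence of the elliptic regularity obtained after harmonicity --- so as written this part of the plan is circular. The paper sidesteps this by working with the a.e.\ Alexandrov Hessian and the distributional Laplacian of a semiconcave function (whose singular part has a favorable sign), which is the correct framework in which the superharmonicity statement, and later the maximum-principle argument, can be carried out.
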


Note that the eigenvalue $\lambda$ in Theorem \ref{main} is not the first eigenvalue. The function $g$ is given by $\exp(-(n-2)t)$ with eigenvalue $n-2$. However, the first eigenvalue is given by $\frac{(n-1)^2}{4}$ which achieves the lower bound obtained in \cite{Ch} (see \cite{LiWa,LiWa2} for more detail).

The second assumption of Theorem \ref{main} means that the eigenfunction $g$ has certain decay in one direction. The last assumption of Theorem \ref{main} is an analogue of the existence of a line in the Cheeger-Gromoll splitting theorem. The condition is slightly more complicated since minimizers no longer have unit speed and minimizer which is the image of the real line might not exist since minimizers could go to infinity in finite time. In fact, this is the case for the models. Note also that there is no condition on ends of the manifold $M$. In particular one end is allowed and $N$ is not necessarily compact.

The proof of Theorem \ref{main} uses tools from the weak KAM theory. This theory started from the paper \cite{LPV} and later becomes a major tool in understanding Hamiltonian systems (see \cite{Fa} and references therein). The whole theory begins with various techniques in obtaining viscosity solutions, called weak KAM solutions, to Hamilton-Jacobi equations of the form $H(x,\nabla f(x))=H_0$ for some constant $H_0$, where $H$ is a Hamiltonian defined on the cotangent bundle of a manifold. The case when $M$ is a torus is dealt with in \cite{LPV}. It is later extended to all compact manifolds (see \cite{Fa2,Fa}). These results assume that the Hamiltonian $H$ is uniformly convex in the directions of the fibres. There are also more recent development in the case when this convexity assumption is dropped (see \cite{Ev2}). In this paper, we deal with the case when  $M$ is non-compact (a closely related result, which is not applicable in our situation, is contained in \cite{FaEz}) and that the Hamiltonian $H$ is of mechanical type with kinetic energy defined by the Riemannian metric and the potential defined by a power of the eigenfunction. In this case, many solutions are possible and the constant $H_0$ is not unique contrary to the compact case. The main work is devoted to the existence and regularity of an appropriate weak KAM solution in this situation.

The organization of this paper is as follows. In Section \ref{SecMech}, we establish the existence of the weak KAM solution mentioned above. In Section \ref{SecRiccati} and \ref{SecHar}, we show that the weak KAM solution is super-harmonic under the first and the second assumptions of Theorem \ref{main}. In Section \ref{SecRegular}, the last assumption of Theorem \ref{main} is used to show that the weak KAM solution is harmonic. Finally, in Section \ref{SecRigid}, we show that the gradient flow of the weak KAM solution provide the isometry stated in Theorem \ref{main}.

\section*{Acknowledgements}
I would like to thank those who supported me throughout my career.

\smallskip

\section{Mechanical Hamiltonians and Weak KAM}\label{SecMech}

In this section, we introduce the mechanical Hamiltonians and the corresponding results needed in this paper.

Let $\left<\cdot,\cdot\right>$ be a Riemannian metric defined on a manifold $M$ and let $V:M\to\mathbb{R}$ be a smooth function. A function $L:TM\to\mathbb{R}$ on the tangent bundle $TM$ of $M$ is a mechanical Lagrangian if it is of the form $L(x,v)=\frac{1}{2}|v|^2+V(x)$. The Riemannian metric also defines an identification of $TM$ with the cotangent bundle $T^*M$ by $v\mapsto \left<v,\cdot\right>$. The induces a Riemannian metric on $T^*M$ and it is denoted by the same symbol. The mechanical Hamiltonian $H:T^*M\to\mathbb{R}$ corresponding to $L$ is given by $H(x,p)=\frac{1}{2}|p|^2-V(x)$. Below, we consider $H$ as a function on the tangent bundle by  identifying tangent and cotangent bundle via the Riemannian metric.

Let $f:M\to\mathbb{R}$ be a bounded continuous function and let $S_Tf$ be the Lax-Oleinik semigroup of $f$  defined by
\begin{equation}\label{LO}
S_T f(x)=\inf_{\gamma(T)=x}\left(f(\gamma(0))+\int_0^TL(\gamma(s),\dot\gamma(s))ds\right)
\end{equation}
where the infimum is taken over absolutely continuous curves $\gamma:[0,T]\to M$ such that $\gamma(T)=x$.

Assuming that $M$ is compact, the weak KAM theorem states that there is a unique constant $c$ and a function $f$ such that $S_tf=f-ct$ (in fact this holds for more general Tonelli Lagrangian, see \cite{Fa}). It also follows that $f$ is a viscosity solution to the Hamilton-Jacobi equation $H(x,df_x)=c$ (see \cite{Ev} and references therein for more details on viscosity solution).

In this paper, the manifold $M$ is non-compact. In this case, the constant $c$ is no longer unique and we are interested in the case when $c=0$. More precisely, we have the following weak KAM type result.

\begin{thm}\label{wkam}
Assume $V>0$ and there exists a curve $\sigma:(-\infty,0]\to M$ such that $\int_{-\infty}^0L(\sigma(t),\dot\sigma(t))dt<\infty$. Then
\begin{enumerate}
    \item a subsequence of $S_Tf$ with $f=0$ converges locally uniform to a locally semi-concave function $F$,
    \item $F$ is given by
    \[
F(x)=\inf_{\gamma:(-\infty,0]\to M, \gamma(0)=x}\int_{-\infty}^0L(\gamma(t),\dot\gamma(t))dt,
\]
\item there is a minimizer $\gamma$ to the above problem and it satisfies $\frac{D^2}{dt^2}\gamma=\nabla V(\gamma)$ and $\dot\gamma(0)$ is in $\nabla^* F(x)$, where $\nabla^*F(x)$ denotes the reachable gradient of $F$ at $x$ (i.e. $v$ is in $\nabla^*F(x)$ if there is a sequence of points $x_i$ such that $\nabla F(x_i)$ exists and its limit is $v$),
\item $F$ is a viscosity solution to the Hamilton-Jacobi equation $$H(x,\nabla F(x))=0$$ (i.e. $H(x,p_+)\leq 0$ for $p_+$ in the super-differential $\nabla^+F(x)$ of $F$ at $x$ and $H(x,p_-)\geq 0$ for $p_-$ in the sub-differential $\nabla^-F(x)$ of $F$ at $x$).
\end{enumerate}
\end{thm}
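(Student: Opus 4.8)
The plan is to run the Lax--Oleinik / weak KAM machinery of \cite{Fa} in this non-compact mechanical setting, with the positivity of $V$, the completeness of $M$, and the finiteness hypothesis on $\sigma$ playing the role that compactness of $M$ plays in the classical theory. After the time translation that rewrites \eqref{LO} with $f=0$ as a minimization with right endpoint fixed at $t=0$ and left endpoint free, one has $S_T0(x)=\inf\{\int_{-T}^0 L(\gamma,\dot\gamma)\,ds:\gamma(0)=x\}$. Since $V>0$ forces $L\ge 0$, restricting a near-minimizer of $S_{T'}0$ to $[-T,0]$ shows $T\mapsto S_T0(x)$ is non-decreasing; feeding in $\sigma$ (translated to reach $\sigma(0)$ at time $-1$ and completed by a fixed curve from $\sigma(0)$ to $x$ on $[-1,0]$) gives $S_T0(x)\le\int_{-\infty}^0 L(\sigma,\dot\sigma)\,dt+C(x)$ for $T\ge 1$ with $C$ locally bounded, so $S_T0$ converges pointwise. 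For (1) I would use the regularizing property of the Lax--Oleinik semigroup: writing $S_T0=S_1(S_{T-1}0)$ and noting that $S_1$ sends functions that are uniformly bounded on a compact set to locally semiconcave ones with constants depending only on the bound and the set (the length bound $\int_{-T}^0|\dot\gamma|\le\sqrt T\,\sqrt{2S_T0(x)}$ confining the competing curves to a compact set), one gets that $\{S_T0\}_{T\ge 1}$ is locally semiconcave with constants independent of $T$. Then $\{S_T0\}$ is locally equi-Lipschitz, and Arzel\`a--Ascoli yields a subsequence converging locally uniformly to a locally semiconcave $F$.

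Writing $\Phi$ for the right-hand side of (2), restriction to $[-T,0]$ of any admissible curve on $(-\infty,0]$ gives $S_T0(x)\le\int_{-\infty}^0 L(\gamma,\dot\gamma)\,dt$, so $F\le\Phi$. For the reverse inequality I would extract a minimizer: Tonelli's theorem (completeness plus the length bound handling the free endpoint) gives a minimizer $\gamma_T$ of $S_T0(x)$ solving $\tfrac{D^2}{dt^2}\gamma_T=\nabla V(\gamma_T)$, whose conserved energy $E_T=\tfrac12|\dot\gamma_T|^2-V(\gamma_T)$ obeys $S_T0(x)=\int_{-T}^0 L=E_TT+2\int_{-T}^0 V\,dt\le F(x)$, forcing $\limsup_T E_T\le 0$ and hence $\tfrac12|\dot\gamma_T(0)|^2=E_T+V(x)$ bounded. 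Passing to a subsequence with $\dot\gamma_T(0)\to v$, the Euler--Lagrange solution $\gamma_\infty$ with data $(x,v)$ is the local uniform limit of the $\gamma_T$; the bound $\int_{-\tau}^0|\dot\gamma_\infty|\le\sqrt\tau\,\sqrt{2F(x)}$ keeps $\gamma_\infty$ in compact sets and rules out finite-time blow-up, so $\gamma_\infty$ lives on all of $(-\infty,0]$, and lower semicontinuity gives $\int_{-\tau}^0 L(\gamma_\infty,\dot\gamma_\infty)\le\liminf_T S_T0(x)\le F(x)$ for every $\tau$, whence $\Phi(x)\le\int_{-\infty}^0 L(\gamma_\infty,\dot\gamma_\infty)\le F(x)$. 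This proves (2) and supplies the minimizer in (3), which solves the stated Euler--Lagrange equation (and one checks its energy is exactly $0$, since $E_\infty<0$ would force $V(\gamma_\infty)\ge-E_\infty/2$ on an infinite interval, contradicting $\int_{-\infty}^0 L\le F(x)$).

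For the rest of (3) and for (4) I would first note that concatenation and the minimality of $\gamma_\infty$ give the dynamic programming identity $S_tF=F$ for all $t\ge 0$, hence $F(y)-F(x)\le c_t(x,y)$ for all $x,y,t$, where $c_t$ is the time-$t$ action, together with the calibration of $\gamma_\infty$. The standard calibrated-curve argument then shows $F$ is differentiable at each $\gamma_\infty(t)$, $t<0$: the smooth function $y\mapsto F(\gamma_\infty(t))-\int_{t-h}^t L(\gamma_\infty,\dot\gamma_\infty)\,ds+c_h(\gamma_\infty(t-h),y)$ touches $F$ from above at $\gamma_\infty(t)$ with gradient $\dot\gamma_\infty(t)$, and the analogous function built from $[t,t+h]$ touches from below with the same gradient; letting $t\uparrow 0$ gives $\dot\gamma_\infty(0)\in\nabla^*F(x)$, which is (3). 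For (4): if $\phi$ is smooth with $\phi\ge F$ and $\phi(x)=F(x)$, then for any curve $\gamma$ on $[-t,0]$ with $\gamma(0)=x$ and $\dot\gamma(0)=w$, $S_tF=F$ gives $\phi(x)\le\phi(\gamma(-t))+\int_{-t}^0 L$; dividing by $t$ and letting $t\downarrow 0$ yields $\langle\nabla\phi(x),w\rangle\le L(x,w)$ for every $w$, and taking $w=\nabla\phi(x)$ gives $H(x,\nabla\phi(x))\le 0$. If $\psi$ is smooth with $\psi\le F$ and $\psi(x)=F(x)$, running the same computation along the optimal trajectory for $S_tF(x)$ — which exists by the same Tonelli argument and contracts to $x$ as $t\downarrow 0$ — and using the Fenchel inequality $\langle p,v\rangle-L(x,v)\le H(x,p)$ gives $\tfrac1t\int_{-t}^0 H(\gamma^*(s),\nabla\psi(\gamma^*(s)))\,ds\ge 0$, hence $H(x,\nabla\psi(x))\ge 0$. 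This is exactly the viscosity solution property in (4).

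The main obstacle is the extraction of the semi-infinite minimizer $\gamma_\infty$ in the second paragraph: one must combine the energy estimate, the length bound, and completeness so as to simultaneously prevent escape to infinity (including in finite backward time, when $V$ is unbounded) and identify the limit as a genuine minimizer of $\Phi$. The semiconcavity bookkeeping in the first paragraph is routine but has to be carried out over compact subsets of the non-compact $M$ with constants uniform in $T$.
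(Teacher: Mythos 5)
Your proposal is correct and reaches all four conclusions by the same overall weak KAM route, but several key steps are implemented differently from the paper, so a comparison is worthwhile. For part (1), the paper exploits the free left endpoint: a minimizer of $S_T0$ starts with zero velocity, so conservation of energy gives $\frac{1}{2}|\dot\gamma(T)|^2\le V(x)$ and hence a $T$-independent local Lipschitz bound, while the upper bound is read off from the curve $\sigma$; you instead use monotonicity of $T\mapsto S_T0$ (from $L\ge 0$) plus the regularizing property $S_T0=S_1(S_{T-1}0)$ to get uniform local semiconcavity. Your version is slightly heavier but also tidier on one point: bounding $S_T0$ at points other than $\sigma(0)$ genuinely needs either the connecting curve you insert or the uniform Lipschitz estimate, a step the paper passes over quickly. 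For part (3), the paper produces a minimizer whose initial velocity lies in $\nabla^*F(x)$ by taking limits of minimizers issued from nearby points of differentiability of $F$; you instead prove differentiability of $F$ along the calibrated curve $\gamma_\infty$ at interior times and let $t\uparrow 0$ --- both are standard, though your touching-function construction tacitly uses short-time regularity (or at least one-sided differentiability) of the fundamental cost $c_h$, which should be stated. For part (4), the paper verifies $H=0$ at differentiability points and then treats the superdifferential via its representation as the convex hull of reachable gradients \cite{CaSi} together with convexity of $H$ in $p$ (the subdifferential case being immediate from semiconcavity); you argue directly with smooth test functions, the identity $S_tF=F$, and the Fenchel inequality along an optimal arc, which is the more classical viscosity-solution argument and avoids the convex-hull lemma. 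Finally, the step you flag as the main obstacle --- ruling out finite-time blow-up of $\gamma_\infty$ --- is correctly handled by the continuation argument you sketch (action bound giving a length bound, hence confinement to a compact set on which energy conservation bounds the speed); this is essentially the same device the paper invokes when it asserts locally uniform $C^k$ bounds on the minimizers.
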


\begin{proof}
First, note that the property $L(x,-v)=L(x,v)$  is used throughout this paper without mentioning. By the first variational formula, minimizers $\gamma$ of the problem (\ref{LO}) satisfies
\begin{equation}\label{minimizer}
\frac{D^2}{ds^2}\gamma(s)=\nabla V(\gamma(s)),\quad \dot\gamma(0)=\nabla f(\gamma(0)),
\end{equation}
where $\frac{D}{ds}$ denotes the covariant derivative.

Moreover, $\nabla (S_Tf)(x)=\dot\gamma(T)$ if $S_Tf$ is differentiable at $x$. Note that $S_Tf$ is locally semi-concave (see \cite{Fa,Vi}). In particular, it is twice differentiable almost everywhere (see \cite{EvGa,Vi}).

Let $\gamma$ be a minimizer of (\ref{LO}) with $f\equiv 0$. Since the Hamiltonian is conserved along any minimzer, it follows that
\begin{equation}\label{Hconserved}
\frac{1}{2}|\dot\gamma(t)|^2=V(\gamma(t))-V(\gamma(0)).
\end{equation}
Therefore,
\[
\int_0^TL(\gamma(s),\dot\gamma(s))ds \geq TV(\gamma(0)).
\]
So if $S_Tf(x)$ stays bounded independent of $T$, then $\gamma(0)\to\infty$ and $V(\gamma(0))\to 0$ as $T\to\infty$.

Since $V\geq 0$, $S_Tf$ is non-negative. By assumption,
\[
S_Tf(x)=\int_0^TL\left(\sigma(s-T),\frac{d}{ds}\sigma(s-T)\right)ds=\int_{-T}^0L\left(\sigma(s),\dot\sigma(s)\right)ds
\]
is bounded above independent of $T$.

On the other hand, by (\ref{Hconserved}), $|\nabla S_Tf(x)|=|\dot\gamma(T)|\leq V(x)$ wherever $S_Tf$ is differentiable. Therefore, $S_Tf$ is locally Lipschitz and the constant is independent of $T$ on any compact subset of $M$ (each $S_Tf$ is locally semi-concave see \cite{Fa,Vi}). Therefore, a subsequence of $T\mapsto S_T f$ has a limit $F$ which is locally Lipschitz.

Next, we show that $F$ is given by
\begin{equation}\label{F}
F(x)=\inf_{\gamma(0)=x}\int_{-\infty}^0L(\gamma(t),\dot\gamma(t))dt
\end{equation}
and $S_TF=F$. In particular, $F$ is locally semi-concave.

For the proof of this, let us assume first that $F$ is differentiable at $x$. Assume that there is a sequence of time $T_i$ such that $S_{T_i}f$ converges to $F$ as $i\to \infty$. Let $\gamma_i$ be a minimizer of (\ref{LO}) with $T$ replaced by $T_i$. Let $\bar\gamma_i(t)=\gamma_i(t+T_i)$. The function $S_{T_i}F$ satisfies
\[
S_{T_i}F(x)=\int_{-T_i}^0L(\bar\gamma_i(t),\dot{\bar\gamma}_i(t))dt\geq\int_{-T}^0\frac{1}{2}|\dot{\bar\gamma}_i(t)|^2dt\geq\frac{d^2(x,\bar\gamma_i(-T))}{2T}.
\]
for all large enough $i$.

It also follows from (\ref{Hconserved}) that
\[
\frac{1}{2}|\dot\gamma_i(t)|^2\leq V(\gamma_i(t)).
\]
It is bounded above by a constant independent of $i$ on compact subsets. By differentiating (\ref{minimizer}), it also follows that the higher derivatives of $\gamma_i$ are locally bounded independent of $i$. Therefore, $\gamma_i$ converges in $C^k$ norms locally uniformly to a curve $\bar\gamma:(-\infty,0]\to M$ such that $\gamma(0)=x$. Moreover, $\int_{-\infty}^0L(\bar\gamma(t),\dot{\bar\gamma}(t))dt$ is bounded. It follows that
\[
\begin{split}
&F(x)=\lim_{i\to\infty}S_{T_i}F(x)\\
&=\lim_{i\to\infty}\int_{-T_i}^0L(\bar\gamma_i(t),\dot{\bar\gamma}_i(t))dt \\
&= \int_{-\infty}^0L(\bar\gamma(t),\dot{\bar\gamma}(t))dt.
\end{split}
\]

On the other hand, let $\tilde\gamma_i$ be a minimizing sequence for the right hand side of (\ref{F}).
\[
\inf_{\tilde\gamma(0)=x}\int_{-\infty}^0L(\tilde\gamma_i(t),\dot{\tilde\gamma}_i(t))dt=\lim_{i\to\infty}\int_{-\infty}^0L(\tilde\gamma_i(t),\dot{\tilde\gamma}_i(t))dt\geq S_{T_k}F(x)
\]
for all positive integer $k$. Therefore, (\ref{F}) holds and $\bar\gamma$ is a minimizer. It also follows that
\[
\begin{split}
&S_TF(x)=\inf_{\gamma(T)=x}\left(F(\gamma(0))+\int_0^TL(\gamma(s),\dot\gamma(s))ds\right)\\
&=\inf_{\gamma(T)=x,\tilde\gamma(0)=\gamma(0)}\left(\int_{-\infty}^0 L(\tilde\gamma(s),\dot{\tilde\gamma}(s))ds+\int_0^TL(\gamma(s),\dot\gamma(s))ds\right)\\
&=\inf_{\gamma(T)=x}\left(\int_{-\infty}^T L(\gamma(s),\dot{\gamma}(s))ds\right)=F(x).
\end{split}
\]

Let $x_i$ be a sequence of points at which $F$ is differentiable and let $\gamma_i$ be a minimizer of the right hand side of (\ref{F}) with $x$ replaced by $x_i$. The same argument as above shows that $\gamma_i\Big|_{[-T,0]}$ is uniformly bounded. An argument using the formula of first variation shows that $\dot\gamma_i(0)=\nabla F(x_i)$ which is bounded since $F$ is locally Lipschitz. By conservation of Hamiltonian and (\ref{minimizer}), it also follows that the higher derivatives of $\gamma_i\Big|_{[-T,0]}$ are also bounded independent of $i$. Therefore, $\gamma_i$ converges in $C^k$ norm locally uniformly to a curve $\gamma:(-\infty,0]\to M$. By continuity of $F$, $\gamma$ is a minimizer of the right hand side of (\ref{F}). It follows that $\dot\gamma(0)$ is contained in the reachable gradient $\nabla^*F(x)$ of $F$ at $x$. Recall that $v$ is in $\nabla^*F(x)$ if there is a sequence of points $x_i$ converging to $x$ such that $\nabla F(x_i)$ exists and $\lim_{i\to\infty}\nabla F(x_i)=v$.

Since $S_tF(x)=F(x)$, there is a curve $\sigma:(0,\infty]\to M$ such that the followings hold for all $t>0$
\[
F(\sigma(t))=S_tF(\sigma(t))=F(\sigma(0))+\int_{0}^tL(\sigma(s),\dot\sigma(s))ds
\]
and $\dot\sigma(0)=\nabla F(\sigma(0))$.

If $F$ is differentiable at $x=\sigma(0)$, then the following holds by letting $t\to 0$ in the above equation
\begin{equation}\label{HJ}
H(x,\nabla F(x))=0.
\end{equation}

By continuity, the same holds with $\nabla F(x)$ replaced by the reachable gradient. Since $F$ is semi-concave and it is super-differentiable everywhere. Finally, if $p$ is in the super-differential of $F$ at $x$, then it is a convex combination of elements $p_0$ and $p_1$ in the reachable gradient of $F$ at $x$ (see \cite{CaSi}). It follows that
\[
H(x,p)=H(x,(1-t)p_0+tp_1)\leq (1-t)H(x,p_0)+tH(x,p_1)=0.
\]
\end{proof}

\smallskip

\section{Matrix Riccati Equations}\label{SecRiccati}

In this section, we consider the Hessian of the function $F$ along a minimizer. It satisfies a matrix Riccati equation and this is the key in obtaining control on the second derivative.

Let $t\mapsto \gamma(t,s)$ be a family of curves parametrized by $s$ which satisfy
\[
\frac{D^2}{dt^2}\gamma(t,s)=\nabla V(\gamma(t,s)).
\]

Let $X(t)=\frac{d}{ds}\gamma(t,s)\Big|_{s=0}$. It satisfies
\begin{equation}\label{var}
\begin{split}
&\frac{D^2}{dt^2}X(t)+\Rm\left(X(t),\dot\gamma(t,0)\right)\dot\gamma(t,0)\\
& =\frac{D}{ds}\frac{D^2}{dt^2}\gamma(t,s)\Big|_{s=0}=\nabla^2 V\left(X(t)\right).
\end{split}
\end{equation}
It follows that $X(t)$ is completely determined by its initial conditions $X(0)$ and $\frac{D}{dt}X(t)\Big|_{t=0}$.

Let $v_1(t),...,v_n(t)$ be a family of orthonormal frames defined along $\gamma(t,0)$ such that $v_1(t)=\frac{1}{|\dot\gamma(t)|}\dot\gamma(t)$ and $\frac{D}{dt}v_i(t)$ is contained in $\mathbb{R} v_1(t)$, $i=2,...,n$. Existence of this family will be established below. Let $E(t)=(v_1(t),...,v_n(t))^T$. It satisfies $\dot E(t)=A(t)E(t)$, where
\[
A(t)=\left(
\begin{array}{cc}
0 & A_2(t)\\
-A_2(t)^T & 0
\end{array}\right)
\]
and $A_2(t)=\left(
\begin{array}{ccc}
\frac{\left<\nabla V(\gamma(t)),v_2(t)\right>}{|\dot\gamma(t)|} & \cdots & \frac{\left<\nabla V(\gamma(t)),v_n(t)\right>}{|\dot\gamma(t)|}
\end{array}\right)$.

Let $B(t)$ be the matrix such that $X(t)=\sum_{j=1}^nB_{ij}(t)v_j(t)$ is a solution of (\ref{var}) with initial condition $X(0)=v_i(0)$ and $\frac{D}{dt}X(t)\Big|_{t=0}=\sum_{j=1}^n\left(\dot B_{ij}(0)+A_{ij}(0)\right)v_j(0)$.
Finally let $S(t)=B(t)^{-1}\dot B(t)+A(t)$.

\begin{lem}\label{griccati}
Assume that $H(\gamma(t,s),\dot\gamma(t,s))=0$ and the Ricci curvature $\ric$ of the manifold is bounded below by a constant $K$. Then $s(t):=\tr(S(t))$ satisfies
\[
\begin{split}
&\dot s(t)+\frac{s(t)^2}{n-1}-\frac{2s(t)}{n-1}\frac{\left<\nabla V(\gamma(t)),\dot\gamma(t)\right>}{|\dot\gamma(t)|^2}+\frac{n\left<\nabla V(\gamma(t)),\dot\gamma(t)\right>^2}{(n-1)|\dot\gamma(t)|^4}\\
&+\frac{2|P\nabla V|^2_{\gamma(t)}}{|\dot\gamma(t)|^2}+2KV(\gamma(t))-\Delta V(\gamma(t))\leq 0.
\end{split}
\]
\end{lem}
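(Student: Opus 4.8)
The plan is to derive the matrix Riccati equation for $S(t)$ from the Jacobi/variation equation (2.2), take the trace, and then estimate the curvature term using the Ricci lower bound together with the constraint $H=0$, i.e. $\tfrac12|\dot\gamma|^2 = V(\gamma)$. First I would record that $B(t)$ is invertible for the relevant range (this is the standard fact that Jacobi-type fields arising from a family of minimizers of a Tonelli Lagrangian are non-degenerate, coming from the semi-concavity of $F$), so that $S = B^{-1}\dot B + A$ is well defined; then differentiating and using $\dot E = AE$ one gets that $S$ satisfies a matrix Riccati equation of the schematic form
\[
\dot S + S^2 - SA - AS + A^2 + \dot A + \mathcal R = 0,
\]
where $\mathcal R(t)$ is the matrix of $v\mapsto \Rm(v,\dot\gamma)\dot\gamma - \hess V(v)$ expressed in the frame $v_1,\dots,v_n$. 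The special choice of frame (parallel except for rotation into the $v_1$ direction) is exactly what makes $\dot E = AE$ hold with the block form given, so all the bookkeeping is in terms of the scalar row $A_2(t)$.

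Next I would exploit the structure coming from $\dot\gamma$ being a solution of $\tfrac{D^2}{dt^2}\gamma = \nabla V(\gamma)$: the field $X(t) = \dot\gamma(t)$ itself solves (2.2) (differentiate the equation in $t$), so one row/column of the Riccati data is explicit, and the $v_1v_1$-component of $\mathcal R$ vanishes because $\Rm(\dot\gamma,\dot\gamma)\dot\gamma = 0$ and $\hess V(\dot\gamma,v_1) = \tfrac{d}{dt}\langle\nabla V,v_1\rangle$-type terms are absorbed into $\dot A$. Taking the trace of the Riccati equation and writing $s(t) = \tr S(t)$, the term $\tr(S^2)$ is bounded below by $\tfrac{(\tr S')^2}{n-1}$ after splitting off the $v_1$-direction where the dynamics is explicitly known — this is where the $\tfrac{1}{n-1}$, the cross term $\tfrac{2s}{n-1}\tfrac{\langle\nabla V,\dot\gamma\rangle}{|\dot\gamma|^2}$, and the $\tfrac{n}{n-1}\tfrac{\langle\nabla V,\dot\gamma\rangle^2}{|\dot\gamma|^4}$ terms are produced (Cauchy–Schwarz / completing the square on the $(n-1)\times(n-1)$ block, keeping track of the off-diagonal contribution of $A_2$). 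The curvature trace gives $-\tr \Rm(\cdot,\dot\gamma)\dot\gamma = -\ric(\dot\gamma,\dot\gamma) \le -K|\dot\gamma|^2 = -2KV(\gamma)$ by the constraint, and $\tr \hess V = \Delta V$; the remaining contribution of $|A_2|^2 = \tfrac{|P\nabla V|^2}{|\dot\gamma|^2}$ (with $P$ the projection orthogonal to $\dot\gamma$) appears with the factor $2$ from the $A^2 + \dot A$ and cross terms.

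The main obstacle I expect is the careful extraction of the scalar inequality from the matrix Riccati equation: one must split the frame into the $v_1$-direction (where $\tfrac{D}{dt}v_1 = \tfrac{1}{|\dot\gamma|}P\nabla V$ and the Jacobi field $\dot\gamma$ gives explicit entries) and the orthogonal $(n-1)$-dimensional block, decompose $S$ accordingly, and show that the off-diagonal blocks (governed by $A_2$) contribute exactly the $+\tfrac{2|P\nabla V|^2}{|\dot\gamma|^2}$ term while the diagonal $(n-1)$-block, after using $\tr(S_\perp^2) \ge \tfrac{(\tr S_\perp)^2}{n-1}$ and relating $\tr S_\perp$ to $s(t)$ and $\tfrac{\langle\nabla V,\dot\gamma\rangle}{|\dot\gamma|^2}$, produces the quadratic-in-$s$ terms. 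I would organize this by first writing $S$ in block form, computing $\dot s$ directly, discarding the manifestly non-negative quantity $\tr\bigl((S_\perp - \tfrac{\tr S_\perp}{n-1}I)^2\bigr)\ge 0$, and then substituting the constraint $|\dot\gamma|^2 = 2V(\gamma)$ and the Ricci bound at the very end to reach the stated inequality.
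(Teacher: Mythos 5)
Your plan follows the paper's proof essentially step for step: derive the matrix Riccati equation for $S=B^{-1}\dot B+A$ from the Jacobi equation in the adapted frame, block-decompose along the $v_1$-direction and its orthogonal complement, take traces, apply Cauchy--Schwarz $\tr(S_3^2)\geq(\tr S_3)^2/(n-1)$ on the perpendicular block, use the constraint $H=0$ to identify $S_1=\langle\nabla V,\dot\gamma\rangle/|\dot\gamma|^2$ and $|S_2|^2=\langle S_2,A_2\rangle=|P\nabla V|^2/|\dot\gamma|^2$, and substitute the Ricci lower bound into $\tr R_3=\ric(\dot\gamma,\dot\gamma)\geq 2KV$ at the end. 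Two small cautions when you carry this out: the schematic Riccati equation you wrote has spurious $A^2+\dot A$ terms (they cancel once you expand $B^{-1}\ddot B$, leaving $\dot S=-S^2-SA-A^TS-R+W$), and the $v_1v_1$-component of the Hessian block $W_1=\nabla^2 V(v_1,v_1)$ does \emph{not} vanish nor get absorbed into $\dot A$ --- it must survive so that the full trace $\tr W=\Delta V$ appears in the final inequality.
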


If we specialize to the case $V=\frac{1}{2}g^{\frac{2n-2}{n-2}}$, where $\Delta g\leq-\lambda g$, then the above lemma implies that

\begin{cor}\label{riccati}
Assume that the Ricci curvature $\ric$ of the manifold is bounded below by a constant $-\frac{\lambda(n-1)}{n-2}$. Let $V=\frac{1}{2}g^{\frac{2n-2}{n-2}}$ and $\Delta g\leq-\lambda g$. Then $s(t)$ satisfies
\[
\begin{split}
&\dot s(t)+\frac{s(t)^2}{n-1}-\frac{2s(t)}{n-2}\frac{\left<\nabla g(\gamma(t)),\dot\gamma(t)\right>}{g(\gamma(t))}\leq 0.
\end{split}
\]
\end{cor}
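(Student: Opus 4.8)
The plan is to substitute the specific potential $V=\frac12 g^{\alpha}$ with $\alpha:=\frac{2n-2}{n-2}$ into the inequality of Lemma \ref{griccati} and check that every term which does not appear in the corollary collects into a nonnegative quantity. First I would record the elementary identities $\nabla V=\frac{\alpha}{2}g^{\alpha-1}\nabla g$ and $\Delta V=\frac{\alpha(\alpha-1)}{2}g^{\alpha-2}|\nabla g|^2+\frac{\alpha}{2}g^{\alpha-1}\Delta g$, together with the three arithmetic facts $\frac{\alpha}{n-1}=\frac{2}{n-2}$, $\frac{\alpha}{2}=\frac{n-1}{n-2}$, and $\alpha(n-2)=2(n-1)$. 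Since $H(\gamma(t),\dot\gamma(t))=0$ along the curve, I also have $|\dot\gamma(t)|^2=2V(\gamma(t))=g^{\alpha}(\gamma(t))$, which is what lets all the $g$-dependent factors be expressed homogeneously in $g$.

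The drift term then transforms exactly: $\frac{\left<\nabla V(\gamma),\dot\gamma\right>}{|\dot\gamma|^2}=\frac{\alpha}{2}\frac{\left<\nabla g(\gamma),\dot\gamma\right>}{g(\gamma)}$, so $-\frac{2s}{n-1}\frac{\left<\nabla V(\gamma),\dot\gamma\right>}{|\dot\gamma|^2}=-\frac{2s}{n-2}\frac{\left<\nabla g(\gamma),\dot\gamma\right>}{g(\gamma)}$, which is precisely the drift term of the corollary. Writing $R$ for the sum of the remaining terms of Lemma \ref{griccati}, namely $R:=\frac{n\left<\nabla V,\dot\gamma\right>^2}{(n-1)|\dot\gamma|^4}+\frac{2|P\nabla V|^2}{|\dot\gamma|^2}+2KV-\Delta V$ with $K=-\frac{\lambda(n-1)}{n-2}$, Lemma \ref{griccati} gives $\dot s+\frac{s^2}{n-1}-\frac{2s}{n-2}\frac{\left<\nabla g,\dot\gamma\right>}{g}\le -R$, so it suffices to prove $R\ge 0$.

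To estimate $R$ I would expand, using $|P\nabla V|^2=|\nabla V|^2-|\dot\gamma|^{-2}\left<\nabla V,\dot\gamma\right>^2$, the relation $|\dot\gamma|^2=g^{\alpha}$, and the hypothesis $\Delta g\le-\lambda g$ (which yields $-\Delta V\ge-\frac{\alpha(\alpha-1)}{2}g^{\alpha-2}|\nabla g|^2+\frac{\alpha\lambda}{2}g^{\alpha}$). Collecting powers of $g$: the coefficient of $g^{\alpha}$ is $-\frac{\lambda(n-1)}{n-2}+\frac{\alpha\lambda}{2}=0$ by $\frac{\alpha}{2}=\frac{n-1}{n-2}$; the coefficient of $g^{\alpha-2}|\nabla g|^2$ is $\frac{\alpha^2}{2}-\frac{\alpha(\alpha-1)}{2}=\frac{\alpha}{2}$; and the coefficient of $g^{-2}\left<\nabla g,\dot\gamma\right>^2$ is $\frac{n\alpha^2}{4(n-1)}-\frac{\alpha^2}{2}=-\frac{\alpha^2(n-2)}{4(n-1)}<0$. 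Hence $R\ge\frac{\alpha}{2}g^{\alpha-2}|\nabla g|^2-\frac{\alpha^2(n-2)}{4(n-1)}g^{-2}\left<\nabla g,\dot\gamma\right>^2$, and since the last coefficient is negative I would invoke Cauchy--Schwarz $\left<\nabla g,\dot\gamma\right>^2\le|\nabla g|^2|\dot\gamma|^2=g^{\alpha}|\nabla g|^2$ to obtain $R\ge\bigl(\frac{\alpha}{2}-\frac{\alpha^2(n-2)}{4(n-1)}\bigr)g^{\alpha-2}|\nabla g|^2$, and this bracket vanishes because $\alpha(n-2)=2(n-1)$. Thus $R\ge 0$ and the corollary follows. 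The only real obstacle is the bookkeeping: one must keep the powers of $g$ straight and verify that the three separate cancellations (in the $g^{\alpha}$, $g^{\alpha-2}|\nabla g|^2$, and $g^{-2}\left<\nabla g,\dot\gamma\right>^2$ coefficients) all fall into place, which they do precisely because of the choice $\alpha=\frac{2(n-1)}{n-2}$; the curvature bound enters only through the single constant $K=-\frac{\lambda(n-1)}{n-2}$, and the rest is Cauchy--Schwarz together with $\Delta g\le-\lambda g$.
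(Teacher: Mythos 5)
Your proposal is correct and takes the same approach as the paper: plug $V=\tfrac12 g^\alpha$ with $\alpha=\tfrac{2(n-1)}{n-2}$ into Lemma \ref{griccati}, use conservation of energy $|\dot\gamma|^2=g^\alpha$, and check that the leftover terms form a nonnegative quantity. The paper states the intermediate formulas for $\nabla V$, $\nabla^2V(v_1,v_1)$, $\Delta V$ and then simply asserts the conclusion by ``specializing the lemma''; you fill in exactly the bookkeeping the paper suppresses, correctly expanding $|P\nabla V|^2=|\nabla V|^2-|\dot\gamma|^{-2}\langle\nabla V,\dot\gamma\rangle^2$, tracking the three $g$-homogeneous groups, and noting that the $g^\alpha$ and (after Cauchy--Schwarz) the $g^{\alpha-2}|\nabla g|^2$ coefficients both vanish precisely because $\alpha(n-2)=2(n-1)$. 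You also correctly carry the hypothesis as the inequality $\Delta g\le-\lambda g$, whereas the paper's proof sloppily writes an equality; your version is the one that actually matches the statement of the corollary. No gaps.
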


\begin{proof}[Proof of Lemma \ref{griccati}]
First, let us establish the existence of the family $v_1(t),...,v_n(t)$. Let $v_1(0),...,v_n(0)$ be an orthonormal frame defined at $y=\gamma(0,0)$ such that $v_1(0)=\frac{1}{|\dot\gamma(0)|}\dot\gamma(0)$. Let $v_1(t),...,v_n(t)$ be family of frames defined along $\gamma(t,0)$ such that $v_1(t)=\frac{1}{|\dot\gamma(t,0)|}\dot\gamma(t,0)$ and $\dot v_i(t)$ is contained in the span of $v_1(t)$ for all $i=2,...,n$.
Indeed, suppose $v_1(t),w_2(t),...,w_n(t)$ is an orthonormal frame. Let $v_i(t)=\sum_{j=1}^nQ_{ij}(t)w_j(t)$. It follows that
\[
\frac{D}{dt}v_i(t)=\sum_{j=1}^n\dot Q_{ij}(t)w_j(t)+\sum_{j=1}^nQ_{ij}(t)\frac{D}{dt}w_j(t)
\]
and so
\[
\left<\frac{D}{dt}v_i(t),v_k(t)\right>=\sum_{j=1}^n\dot Q_{ij}(t)Q_{kj}(t)+\sum_{j,l=1}^nQ_{ij}(t)Q_{kl}\left<\frac{D}{dt}w_j(t),w_l(t)\right>
\]

The matrix $D(t)$ with $jk$-th entry equal to $\left<\frac{D}{dt}w_j(t),w_k(t)\right>$ is skew-symmetric. Therefore, there is a solution to the equation $\dot Q(t)=Q(t)D(t)$. It follows that
$\left<\frac{D}{dt}v_i(t),v_k(t)\right>=0$ for all $i,j\neq 1$. Hence, $\dot v_i(t)$ is in the span of $v_1(t)$ as claimed.

A computation shows that
\[
\begin{split}
&\frac{D}{dt}v_1(t)=\frac{1}{|\dot\gamma(t)|}\frac{D^2}{dt^2}\gamma(t)-\frac{\left<\frac{D^2}{dt^2}\gamma(t),\dot\gamma(t)\right>}{|\dot\gamma(t)|^3}\dot\gamma(t)\\
&=\frac{1}{|\dot\gamma(t)|}\sum_{i=2}^n\left<\nabla V(\gamma(t)),v_i(t)\right>v_i(t).
\end{split}
\]
Let $E(t)=(v_1(t),...,v_n(t))^T$. Then $\dot E(t)=A(t)E(t)$, where
\[
A(t)=\left(\begin{array}{cc}
0 & A_2(t)\\
-A_2(t)^T & 0
\end{array}\right)
\]
and $A_2(t)=\left(\begin{array}{ccc}
\frac{\left<\nabla V_{\varphi_t},v_2(t)\right>}{|\dot\varphi_t|} & \cdots & \frac{\left<\nabla V_{\varphi_t},v_n(t)\right>}{|\dot\varphi_t|}
\end{array}\right)$.

By (\ref{var}), the family $B(t)$ satisfies
\begin{equation}\label{B}
\begin{split}
&\ddot B(t)+2\dot B(t)A(t)+B(t)\dot A(t)\\
& +B(t)A(t)^2+B(t)R(t)-B(t)W(t)=0,
\end{split}
\end{equation}
where $W_{ij}(t)=\nabla^2V(v_i(t),v_j(t))$.

Recall that $S(t)=B(t)^{-1}\dot B(t)+A(t)$. It satisfies
\begin{equation}\label{S}
\begin{split}
&\dot S(t)=-B(t)^{-1}\dot B(t)B(t)^{-1}\dot B(t)+B(t)^{-1}\ddot B(t)+\dot A(t)\\
&=-B(t)^{-1}\dot B(t)B(t)^{-1}\dot B(t)\\
&-2B(t)^{-1}\dot B(t)A(t)-A(t)^2-R(t)+W(t)\\
&=-S(t)^2-S(t)A(t)-A(t)^TS(t)-R(t)+W(t).
\end{split}
\end{equation}

Let us split the matrices
\[
S(t)=\left(
\begin{array}{cc}
S_1(t) & S_2(t)\\
S_2(t)^T & S_3(t)\end{array}\right),
\]
\[
W(t)=\left(
\begin{array}{cc}
W_1(t) & W_2(t)\\
W_2(t)^T & W_3(t)\end{array}\right),
\]
and
\[
R(t)=\left(
\begin{array}{cc}
0 & 0\\
0 & R_3(t)\end{array}\right).
\]
Here $S_3(t)$ and $W_3(t)$, and $R_3(t)$ are $(n-1)\times (n-1)$ blocks.

The block $S_3(t)$ satisfies
\[
\begin{split}
&0=\dot S_3(t)+S_2(t)^TS_2(t)+S_3(t)^2\\
&+S_2(t)^TA_2(t)+A_2(t)^TS_2(t)+R_3(t)-W_3(t).
\end{split}
\]

Its trace $s_3(t)=\tr(S_3(t))$ satisfies
\begin{equation}\label{S3}
\begin{split}
&0=\dot s_3(t)+|S_2(t)|^2+2\left<S_2(t),A_2(t)\right>\\
&+|S_3(t)|^2+\tr(R_3(t))-\tr(W_3(t))
\end{split}
\end{equation}

On the other hand, by differentiating the condition $H(\gamma(t,s),\dot\gamma(t,s))=0$, we also have
\[
|S_2(t)|^2=\frac{|P\nabla V(\gamma(t,0))|^2}{|\dot\gamma(t,0)|^2}=\left<S_2(t),A_2(t)\right>
\]
and
\[
S_1(t)=\frac{\left<\nabla V(\gamma(t)),\dot\gamma(t)\right>}{|\dot\gamma(t)|^2}.
\]

Therefore,
\begin{equation}\label{S1}
\begin{split}
&0=\dot S_1(t)+S_1(t)^2+|S_2(t)|^2-2\left<S_2(t),A_2(t)\right>-W_1(t)\\
&=\dot S_1(t)+\frac{\left<\nabla V,\nabla F\right>^2_{\varphi_t(y)}}{|\nabla F|^4_{\varphi_t(y)}}-\frac{|P\nabla V|^2_{\varphi_t(y)}}{|\nabla F|^2_{\varphi_t(y)}}-W_1(t).
\end{split}
\end{equation}

Let $s(t)=S_1(t)+s_3(t)$. By combining (\ref{S1}) and (\ref{S3}), we obtain
\[
\begin{split}
&\dot s(t)+|S_3(t)|^2+\frac{\left<\nabla V(\gamma(t)),\dot\gamma(t)\right>^2}{|\dot\gamma(t)|^4}\\
&+\frac{2|P\nabla V(\gamma(t))|^2}{|\dot\gamma(t)|^2}+\tr(R_3(t))-\Delta V(\gamma(t))=0.
\end{split}
\]
The result follows from this by applying the Ricci curvature lower bound.
\end{proof}

\begin{proof}[Proof of Corollary \ref{riccati}]
Let $V=\frac{1}{2}g^{\frac{2n-2}{n-2}}$ and $\Delta g=-\lambda g$. A computation shows that
\[
\begin{split}
&\nabla V=\frac{n-1}{n-2}g^{\frac{n}{n-2}}\nabla g\\
&\nabla^2 V(v_1(t),v_1(t))=\frac{n-1}{n-2}\frac{n}{n-2}g^{\frac{2}{n-2}}\left<\nabla g,v_1(t)\right>^2\\
&+\frac{n-1}{n-2}g^{\frac{n}{n-2}}\nabla^2 g(v_1(t),v_1(t))\\
&\Delta V=\frac{n-1}{n-2}\left(\frac{n}{n-2}g^{\frac{2}{n-2}}|\nabla g|^2-\lambda g^{\frac{2n-2}{n-2}}\right).
\end{split}
\]

By specializing the lemma to the case $K=-\frac{\lambda(n-1)}{n-2}$, we have
\[
\begin{split}
&\dot s(t)+\frac{s(t)^2}{n-1}-\frac{2s(t)}{n-1}\frac{\left<\nabla V(\gamma(t)),\dot\gamma(t)\right>}{|\dot\gamma(t)|^2}\leq 0.
\end{split}
\]
\end{proof}

\smallskip

\section{Super-Harmonicity of the Weak KAM Solution}\label{SecHar}

In this section, we show that the weak KAM solution found above is super-harmonic in the weak sense under the assumptions of Theorem \ref{main} (i.e. the distributional Laplacian acting on the weak KAM solution is non-positive).

Let $f$ be a locally semi-concave function such that
\begin{equation}\label{LOsg}
f(x)=\inf_{\gamma_0(T)=x}\left(f(\gamma_0(0))+\int_0^TL(\gamma_0(t),\dot\gamma_0(t))dt\right).
\end{equation}

\begin{thm}\label{superharmonic}
Under the assumptions in Corollary \ref{riccati}, the function $f$ is super-harmonic in the weak sense.
\end{thm}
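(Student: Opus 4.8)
The plan is to show that for every smooth test function $\phi\geq 0$ with compact support, $\int_M f\,\Delta\phi\;\vol\leq 0$. Since $f$ is locally semi-concave, it is twice differentiable almost everywhere (by Alexandrov's theorem), and the distributional Laplacian decomposes into an absolutely continuous part, given pointwise a.e.\ by $\tr(\hess f)$, plus a non-positive singular part coming from the concave ``corners''. Hence it suffices to prove the pointwise inequality $\tr(\hess f)(x)\leq 0$ at almost every point $x$ where $f$ is twice differentiable. So the whole matter reduces to estimating $\tr(\hess f)$ at a point of twice-differentiability.

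The key step is to identify $\tr(\hess f)(x)$ with the quantity $s(t)$ from Section \ref{SecRiccati}. Fix a point $x$ of twice-differentiability of $f$. By the representation (\ref{LOsg}) and the first variation formula (\ref{minimizer}), there is a minimizer $\gamma_0:[0,T]\to M$ with $\gamma_0(T)=x$, $\dot\gamma_0(T)=\nabla f(x)$, solving $\frac{D^2}{dt^2}\gamma_0=\nabla V(\gamma_0)$; reparametrize so that this minimizer arrives at $x$ at time $t=0$ and is defined on $[-T,0]$. Extend $f$ by flowing: near $x$ the function $f$ is propagated by the Lax--Oleinik semigroup along the Lagrangian flow, so the graph of $\nabla f$ is an invariant Lagrangian submanifold whose tangent spaces evolve by the Jacobi/variation equation (\ref{var}). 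Consequently the matrix $B(t)$ built from the variation fields with $X(0)$ ranging over an orthonormal frame is, up to the moving frame $E(t)$, the differential of the Lagrangian flow restricted to this invariant manifold, and $S(t)=B(t)^{-1}\dot B(t)+A(t)$ is exactly the matrix representing $\hess f$ along $\gamma_0(t)$ in the frame $v_1(t),\dots,v_n(t)$. In particular $s(t)=\tr(S(t))=\Delta f(\gamma_0(t))$ wherever $f$ is twice differentiable along the minimizer, and in particular $s(0)=\tr(\hess f)(x)$. One has to check that the invertibility of $B(t)$ holds on $(-T,0]$ (no conjugate points strictly before the endpoint, since $\gamma_0$ is minimizing), which is what allows $S(t)$ to be defined on the relevant interval.

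With this identification, apply Corollary \ref{riccati}: along $\gamma_0$, setting $a(t)=s(t)$ and $u(t)=\frac{\langle\nabla g(\gamma_0(t)),\dot\gamma_0(t)\rangle}{g(\gamma_0(t))}$, we have the differential inequality
\[
\dot s(t)+\frac{s(t)^2}{n-1}-\frac{2s(t)}{n-2}\,u(t)\leq 0
\]
on $(-T,0]$. The conclusion $s(0)\leq 0$ must be extracted from this inequality together with the boundary behavior as $t\to -T$, where $S(t)\to +\infty$ along the kernel directions of the Jacobi fields, i.e.\ $\liminf_{t\to -T^+}s(t)=+\infty$ (here one uses that $B(t)$ degenerates at the initial time, forcing the trace of $B^{-1}\dot B$ to blow up). A Riccati comparison argument then gives the bound: if $s(0)$ were positive, integrating the inequality forward from a time close to $-T$ would be fine, but integrating it \emph{backward} from $0$ — rewriting it as a scalar Riccati inequality and comparing with the solution having the correct blow-up at $-T$ — forces $s(0)\leq 0$; more precisely, one shows that the presence of a finite positive value at $t=0$ is incompatible with the forced singularity at $t=-T$, after absorbing the first-order term by an integrating factor built from $g(\gamma_0(t))^{\,2/(n-2)}$ (which converts the $u(t)$ term away and leaves a monotone quantity). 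The main obstacle is this last analytic step: one must handle the non-autonomous term $u(t)$ carefully and make the blow-up at the conjugate-point boundary rigorous, including the case where the minimizer $\gamma_0$ escapes to infinity or $T\to\infty$ (so that the ``boundary'' is at $-\infty$), in which case the finiteness assumption from Theorem \ref{wkam} must be invoked to control the relevant integrals and rule out a positive limiting value of $s$.
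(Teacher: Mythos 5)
Your reduction is exactly the paper's first step: semi-concavity gives the Alexandrov decomposition of $D^2f$ with non-positive singular trace, so it suffices to show $\tr(\nabla^2 f)(x)\le 0$ at a.e.\ point of twice differentiability. The gap is in the decisive analytic step, and the mechanism you propose for it does not work. For the Lax--Oleinik evolution with initial datum $f$, the characteristic arriving at $x$ starts at time $-T$ with $B(-T)=I$ and with initial data for the Riccati flow determined by $f$ near $\gamma_0(0)$; there is no point-source degeneracy of $B$ at the initial time, so your claimed boundary behaviour $\liminf_{t\to -T^+}s(t)=+\infty$ is false (that blow-up is a feature of distance-like value functions, not of $S_Tf$ with general initial data). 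Moreover, even if such a singularity were available, integrating the Riccati inequality forward from $+\infty$ at $t=-T$ only produces bounds of the type $s(0)\lesssim (n-1)/T$ (plus contributions from the non-autonomous term $u$), i.e.\ a \emph{positive} bound for each finite $T$; indeed $S_Tf$ for finite $T$ and generic $f$ is not superharmonic, so no finite-horizon comparison can give $s(0)\le 0$. What makes the theorem true is the infinite-time calibration ($S_Tf=f$ for all $T>0$) together with the decay of $g$ along the calibrated ray, and your sketch never brings these in quantitatively --- you yourself flag this step as ``the main obstacle'' and leave it open, so the heart of the proof is missing.

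For comparison, the paper's route avoids both difficulties. It flows \emph{forward from $x$} with initial velocity $-\nabla f(x)$, i.e.\ along the time-reversed calibrated curve, which is defined for all $t\ge 0$ precisely because $f$ is a fixed point of the semigroup for every $T$; then $S(0)=-\nabla^2 f(x)$ is the only point where a Hessian of $f$ is used, and afterwards $S(t)$ is simply the Riccati evolution of that matrix (your stronger claim that $S(t)$ equals $\nabla^2 f$ all along the minimizer would require twice differentiability of $f$ along the curve, which is not known and not needed). After rescaling to unit speed and substituting $b=s/g^{(n-1)/(n-2)}$, the comparison equation is solved explicitly via Levin's fundamental solution and Royden's comparison theorem, giving
\[
b(c(t))\le \Bigl(\tfrac{g(x)^{2k}}{g(\gamma(c(t)))^{2k}}\,b(0)\Bigr)\Bigl(b(0)\int_0^t\tfrac{g(x)^{2k}}{(n-1)g(\gamma(c(s)))^{2k}}\,ds+1\Bigr)^{-1},
\]
and the decisive input is $g(\gamma(c(t)))\to 0$ along this ray (a consequence of the finiteness assumption, as in Theorem \ref{wkam}), which makes the weight integral diverge: if $\Delta f(x)>0$, i.e.\ $b(0)<0$, then $b$ blows up to $-\infty$ in finite time, contradicting the absence of conjugate points along a curve that is minimizing on every interval $[0,T']$. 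If you want to keep your orientation (working along the incoming minimizer on $[-T,0]$ and letting $T\to\infty$), you would have to replace the false blow-up claim by an a priori bound on $S$ at time $-T$ and then reproduce essentially this same weighted comparison in reversed time; as written, the proposal does not establish the inequality.
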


For the proof, we will use the notations from the last section.

\begin{proof}
First, note that the distributional second derivative $D^2f$ of a semi-concave function is a measure \cite{EvGa}. By the Alexandrov theorem \cite{EvGa}, $f$ is twice differentiable almost everywhere and the absolute continuous part of $D^2f$ is given by this almost everywhere second derivative $\nabla^2f$. Moreover, the trace of the singular part of $D^2f$ is non-positive \cite{EvGa}. Therefore, it remains to show that the trace of $\nabla^2f$ is non-positive.

First, let us consider the Hamiltonian flow $\Psi_t:TM\to TM$ (again $T^*M$ and $TM$ are identified) defined by
\[
\begin{split}
&\frac{D^2}{dt^2}\pi(\Psi_t(x,v))=\nabla V(\pi(\Psi_t(x,v))),\\
&\frac{d}{dt}\pi(\Psi_t(x,v))=\Psi_t(x,v), \quad \Psi_0(x,v)=(x,v),
\end{split}
\]
where $\pi:TM\to M$ is the natural projection map.

This map is the flow of the Hamiltonian vector field and so it is $C^\infty$ in all variables. Let $x$ be a point where $f$ is twice differentiable and let $\varphi_t(y)=\Psi_t(y,-\nabla f(y))$. This map is differentiable in $x$ and the derivation of (\ref{S}) is still valid, where $S(0)$ is the matrix of $-\nabla^2f(x)$ with respect to an orthonormal frame. The arguments in Corollary \ref{riccati} is also valid.

Therefore, the conclusion of Corollary \ref{riccati} is valid and we have
\[
|\dot\gamma(t)|^{-1}\dot b(t)\leq-\frac{b(t)^2}{n-1} -\frac{(n-3)(n-1)b(t)}{(n-2)^2}\left<\nabla\log g(\gamma(t)),v_1(t)\right>
\]
where $b(t)=\frac{s(t)}{g(\gamma(t))^{\frac{n-1}{n-2}}}$, $\gamma(t)=\varphi_t(x)$, and $s(t)=\tr(S(t))$.

Next, we rescale time
\[
\frac{d}{dt} b(c(t))\leq-\frac{b(c(t))^2}{n-1} -\frac{(n-3)(n-1)b(c(t))}{(n-2)^2}\left<\nabla \log g(\gamma(c(t))),v_1(c(t))\right>,
\]
where $\dot c(t)=|\dot\gamma(c(t))|^{-1}$ and $c(0)=0$. This rescaling ensures that $\gamma(c(t))$ has constant speed and so does not go to infinity in finite time.

Let $\bar b(t)$ be the solution of the equation
\[
\frac{d}{dt}\bar b(t) =-\frac{\bar b(t)^2}{n-1} -\frac{(n-3)(n-1)\bar b(t)}{(n-2)^2}d(t),
\]
with initial condition $\bar b(0)=b(c(0))$, where $d(t)=\frac{d}{dt}\log g(\gamma(c(t)))$.

The function $\bar b$ can be found explicitly in terms of $g$ using the method in \cite{Lev}. For this, let $M(t)$ be the fundamental solution of the equation
\[
\dot x(t)=\left(
\begin{array}{cc}
 -k\,d(t) & 0\\
\frac{1}{n-1} & k\,d(t)
\end{array}\right)x(t)
\]
which is given by
\[
M(t)=\left(
\begin{array}{cc}
\frac{g(x)^k}{g(\gamma(c(t)))^k} & 0\\
\int_0^t\frac{g(x)^kg(\gamma(c(t)))^k}{(n-1)g(\gamma(c(s)))^{2k}}ds & \frac{g(\gamma(c(t)))^k}{g(x)^k}
\end{array}\right),
\]
where $k=\frac{(n-3)(n-1)}{2(n-2)^2}$.

It follows that
\[
\bar b(t)=\left(\frac{g(x)^{2k}}{g(\gamma(c(t)))^{2k}}b(0)\right)\left(b(0)\int_0^t\frac{g(x)^{2k}}{(n-1)g(\gamma(c(s)))^{2k}}ds+1\right)^{-1}.
\]

By comparison principle \cite{Ro}, $b(c(t))\leq\bar b(t)$. Since $\lim_{t\to\infty}g(\gamma(c(t)))=0$, $\bar b(t)\to -\infty$ in finite time if $b(0)$ is negative. This gives a contradiction. Therefore, $-\Delta f(x)=s(0)\geq 0$ as claimed.
\end{proof}

\smallskip

\section{Regularity of the Weak KAM Solution}\label{SecRegular}

In this section, we prove higher regularity for the function $F$. In fact, we show that $F$ is harmonic and so $C^\infty$. For this, we need to consider another minimization problem and another weak KAM solution
\begin{equation}\label{minusF}
S_T(-F)(x)=\inf_{\gamma(T)=x}\left(-F(\gamma(0))+\int_0^TL(\gamma(t),\dot\gamma(t))dt\right).
\end{equation}

In the proof of the following theorem, we show that a subsequence of $S_T(-F)$ converges to $-F$ under the last assumption of Theorem \ref{main}. Then, the arguments from the previous section shows that it is both super- and sub-harmonic.

\begin{thm}
    Assume that the assumptions of Theorem \ref{main} holds. Then $F$ is a harmonic function. In particular, it is $C^\infty$.
\end{thm}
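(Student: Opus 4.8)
The plan is to establish harmonicity of $F$ by showing $-F$ is also super-harmonic in the weak sense, so that $\Delta F \geq 0$ and $\Delta F \leq 0$ together force $\Delta F = 0$; elliptic regularity (Weyl's lemma) then gives $F \in C^\infty$. The function $-F$ is not obviously of the Lax--Oleinik form (\ref{LOsg}) — that is the point of introducing the auxiliary minimization problem (\ref{minusF}) — so the real work is to show that (a subsequence of) $S_T(-F)$ converges locally uniformly to $-F$, and then invoke Theorem \ref{superharmonic}.

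First I would verify that $S_T(-F)$ is well-defined, non-positive (since $F \geq 0$ and $V \geq 0$, actually $S_T(-F) \leq -F$ trivially by stationary-ish comparison, and boundedness below needs care), and locally semi-concave with constants uniform in $T$, exactly as in the proof of Theorem \ref{wkam}: conservation of the Hamiltonian along minimizers gives $|\nabla S_T(-F)(x)| = |\dot\gamma(T)| \leq \sqrt{2V(x) + 2F(x)_{+}}$ or a similar pointwise bound, hence uniform local Lipschitz bounds and a convergent subsequence with limit $G$. The inequality $G \leq -F$ should be immediate (take a competitor that sits still or uses the minimizer realizing $F$ on $(-\infty,0]$ translated to $(-\infty,T]$). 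For the reverse inequality $G \geq -F$, this is where the third assumption of Theorem \ref{main} enters: one takes the sequence of minimizers $\gamma_i : (-\infty, T_i] \to M$ with $\gamma_i(0)$ bounded, restricts and reparametrizes them, uses conservation of energy plus (\ref{minimizer}) and its derivative to get $C^k_{loc}$ bounds, extracts a limiting curve, and shows that this limiting curve simultaneously (i) realizes $F$ at its terminal point and (ii) realizes the competitor in (\ref{minusF}) that forces $S_{T_i}(-F) \to -F$. Concretely, one exploits that $F(\gamma_i(T_i)) = F(\gamma_i(0)) + \int_0^{T_i} L(\gamma_i, \dot\gamma_i)$ (the optimality relation from $S_T F = F$) to rewrite $-F(\gamma_i(0)) = -F(\gamma_i(T_i)) + \int_0^{T_i} L$, and deduce $S_{T_i}(-F)$ at the appropriate point equals $-F$ there in the limit.

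Once $G = -F$ is established along a subsequence, I would observe that $G$ satisfies the fixed-point relation $S_t G = G$ (same telescoping argument as in Theorem \ref{wkam}, parts 1--2), so that $G = -F$ is itself a locally semi-concave function of the form (\ref{LOsg}) with $f$ replaced by $-F$. The hypotheses of Corollary \ref{riccati} (Ricci lower bound $-\tfrac{\lambda(n-1)}{n-2}$, $V = \tfrac12 g^{(2n-2)/(n-2)}$, $\Delta g \leq -\lambda g$) are intrinsic to $M$ and do not care which semi-concave Lax--Oleinik function we feed in, so Theorem \ref{superharmonic} applies verbatim to $-F$: its distributional Laplacian is non-positive, i.e. $\Delta F \geq 0$ weakly. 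Combined with Theorem \ref{superharmonic} applied to $F$ itself ($\Delta F \leq 0$ weakly), we get $\Delta F = 0$ in the sense of distributions, and Weyl's lemma upgrades $F$ to a smooth harmonic function.

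The main obstacle I anticipate is the convergence $S_T(-F) \to -F$, specifically the lower bound $G \geq -F$. Unlike the situation in Theorem \ref{wkam} where the "time runs from $-\infty$" structure made the infimum over half-line curves natural, here $-F$ is being regenerated by a forward Lax--Oleinik semigroup, and there is no a priori reason a minimizer for (\ref{minusF}) exists or stays in a compact set — this is precisely why assumption (3) of Theorem \ref{main} (a bounded sequence of minimizers $\gamma_i(0)$) is imposed. I would need to be careful that the reparametrization handling non-unit-speed minimizers (as in Section \ref{SecHar}, curves can escape to infinity in finite parameter-time since $|\dot\gamma| \to 0$ as $g \to 0$) does not interfere with extracting the $C^k_{loc}$-convergent subsequence, and that the terminal points $\gamma_i(T_i)$ converge (or can be arranged to, perhaps after a diagonal argument) so that continuity of $F$ closes the estimate. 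A secondary point requiring attention is confirming $S_T(-F)$ is bounded below uniformly in $T$ — here one uses that $-F$ is bounded below (it need not be, a priori, if $F$ is unbounded above, so one may need to localize or use that the relevant minimizers stay in a fixed compact set where $F$ is bounded) — but given the hypotheses this should be routine.
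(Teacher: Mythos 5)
Your overall strategy is the paper's (show that $-F$ is also a fixed point of the Lax--Oleinik semigroup, apply Theorem \ref{superharmonic} to both $F$ and $-F$, and finish with elliptic regularity), but you have the two inequalities for $G=\lim S_T(-F)$ swapped, and the step that actually closes the argument is missing. The \emph{easy} direction is $S_T(-F)\geq -F$: for any competitor $\gamma_0$ in (\ref{minusF}) one concatenates $\gamma_0$ (reversed) with a backward minimizer realizing $F(x)$ and uses that the concatenation is admissible in (\ref{F}) at the point $\gamma_0(0)$; this needs nothing beyond the definition of $F$. Your claimed ``immediate'' bound $G\leq -F$ is not immediate, and your proposed competitors do not give it: the stationary curve yields $S_T(-F)(x)\leq -F(x)+TV(x)$, and the time-translated backward minimizer $\gamma_1$ yields $S_T(-F)(x)\leq -F(x)+2\int_{-T}^0 L(\gamma_1(t),\dot\gamma_1(t))\,dt$, both \emph{larger} than $-F(x)$ since $L\geq 0$. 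To get $S_T(-F)(x)\leq -F(x)$ at a point $x$ one needs a forward calibrated curve issuing from $x$ (a curve along which $F$ grows at the maximal rate), and the existence of such curves at \emph{every} point is essentially the rigidity one is trying to prove; so your plan to prove $G=-F$ globally by a direct construction from assumption (3) of Theorem \ref{main} has no mechanism for points off the special minimizers $\gamma_i$.

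What the paper does instead, and what your proposal lacks, is the strong maximum principle step. Along any backward minimizer $\gamma$ one has the calibration identity giving $S_T(-F)(\gamma(-T))=-F(\gamma(-T))$, and assumption (3) supplies such points staying in a fixed compact set, hence (after passing to a limit and using the monotonicity of $T\mapsto S_T(-F)$, which follows from $S_T(-F)\geq -F$ and the semigroup property and also guarantees $S_tG=G$) a single point $x$ with $F(x)+G(x)=0$. Since both $F$ and $G$ satisfy the fixed-point relation (\ref{LOsg}), Theorem \ref{superharmonic} makes $F+G$ weakly super-harmonic; as $F+G\geq 0$ everywhere and vanishes at $x$, the strong maximum principle forces $F+G\equiv 0$, so $F=-G$ is simultaneously super- and sub-harmonic, hence harmonic and $C^\infty$. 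Without this maximum-principle propagation from one point to all of $M$, your argument does not establish that Theorem \ref{superharmonic} may be applied to $-F$, and the proof is incomplete.
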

\begin{proof}
The family of functions $S_T(-F)$ is bounded below by $-F$. Indeed, let $\gamma_0$ be a minimizer of the problem (\ref{minusF}) and let $\gamma_1:(-\infty,0]\to M$ be a minimizer of (\ref{F}). Finally let $\gamma_2:(-\infty,0]\to M$ be the path
\[
\gamma_2(t)=\begin{cases}
\gamma_1(t+T) & \mbox{if } t\in(-\infty,-T]\\
\gamma_0(-t) & \mbox{if } t\in(-T,0].
\end{cases}
\]

It follows that
\[
\begin{split}
&S_T(-F)(x)= -F(\gamma_0(0))+\int_0^TL(\gamma_0(t),\dot\gamma_0(t))dt\\
&\geq -\int_{-\infty}^0 L(\gamma_2(t),\dot\gamma_2(t))dt+\int_0^TL(\gamma_0(t),\dot\gamma_0(t))dt\\
&=-\int_{-\infty}^{-T} L(\gamma_1(t),\dot\gamma_1(t))dt\\
&-\int_{-T}^0 L(\gamma_0(-t),\dot\gamma_0(-t))dt+\int_0^TL(\gamma_0(t),\dot\gamma_0(t))dt\\
&=-\int_{-\infty}^{-T} L(\gamma_1(t),\dot\gamma_1(t))dt =-F(x).
\end{split}
\]

If $S_T(-F)$ is differentiable at $x$, then $\dot\gamma(T)=\nabla S_T(-F)(x)$, $-\dot\gamma(0)$ is in $\nabla^*F(\gamma(0))$, and $\frac{D^2}{dt^2}\gamma=\nabla V(\gamma)$. By the conservation of Hamiltonian and that $F$ is a viscosity solution of the Hamilton-Jacobi equation,
\[
\frac{1}{2}|\nabla S_T(-F)(x)|^2\leq \frac{1}{2}|\dot\gamma(0)|^2-V(\gamma(0))+V(x)=V(x).
\]

It follows that $S_T(-F)$ is locally Lipschitz with constant independent of $T$ on compact subsets. It follows that $S_T(-F)$ converges locally uniformly to a Lipschitz function $G$. It also follows that $F+G\geq 0$.

Next, we show that $T\mapsto S_T(-F)$ is monotone. It follows that the whole sequence converges to $G$. This, in turn, implies that $S_TG=G$. Monotonicity follows from $S_T(-F)\geq -F$ and the semi-group property $S_{t+h}(-F)=S_h(S_t(-F))$. Indeed,
\[
\begin{split}
&S_{t+h}f(x)=\inf_{\gamma(t+h)=x}\left(f(\gamma(0))+\int_0^{t+h} L(\gamma(s),\dot{\gamma}(s))ds\right)\\
&=\inf_{\gamma(t+h)=x}\left(f(\gamma(0))+\int_0^{t} L(\gamma(s),\dot{\gamma}(s))ds+\int_t^{t+h} L(\gamma(s),\dot{\gamma}(s))ds\right)\\
&=\inf_{\gamma(t+h)=x}\left(S_tf(\gamma(t))+\int_t^{t+h} L(\gamma(s),\dot{\gamma}(s))ds\right) =S_h(S_t(f))(x).
\end{split}
\]

Finally, assume that $S_tG$ is differentiable at $x$ and let $\gamma:[0,t]\to M$ be a minimizer of (\ref{LO}) with $\gamma(t)=x$ and $F$ replaced by $G$. It follows that
\[
\begin{split}
&S_tG(x)=G(\gamma(0))+\int_0^tL(\gamma(s),\dot\gamma(s))ds\\
&=\lim_{T\to\infty}\inf_{\bar\gamma(T)=\gamma(0)}\left(-F(\bar\gamma(0))+\int_0^TL(\bar\gamma(s),\dot{\bar\gamma}(s))ds+\int_0^tL(\gamma(s),\dot\gamma(s))ds\right)\\
&= G(x)
\end{split}
\]
as claimed.

Let $x$ be a point where $F$ is differentiable and let $\gamma:(-\infty,0]\to M$ be a minimizer of (\ref{F}). It follows that
\[
\begin{split}
&S_T(-F)(\gamma(-T))=-F(\gamma(0))+\int_0^TL\left(\gamma(-t),\frac{d}{dt}\gamma(-t)\right)dt\\
&=-\int_{-\infty}^{0}L(\gamma(t),\dot\gamma(t))dt+\int_{-T}^0L(\gamma(t),\dot\gamma(t))dt\\
&=-\int_{-\infty}^{-T}L(\gamma(t),\dot\gamma(t))dt=-F(\gamma(-T)).
\end{split}
\]
By applying the above arguments to a sequence of points on the minimizer given by the statement of the theorem, it follows that $G(x)+F(x)=0$ at a point $x$.

The argument in the previous section showed that $F+G$ is super-harmonic in the weak sense. Therefore, by strong maximum principle \cite{GiTr}, it must vanish. It follows that $F=-G$. This implies that $F$ is harmonic and so $C^\infty$.
\end{proof}

\smallskip

\section{Rigidity and the End of Proof of Theorem \ref{main}}\label{SecRigid}

In this section, we finish the proof of the theorem.  This is essentially the argument used in \cite{Wa,LiWa} written in the language used in this paper. In the following, we will use the same notations as the one in Section \ref{SecRiccati}.

The equality cases of (\ref{S1}) and (\ref{S3}) are satisfied. It follows that $S_3(t)$ is a multiple of identity, $P\nabla g(\varphi_t)=0$, and $\tr(R_3(t))=-\frac{\lambda (n-1)}{n-2}|\nabla F|_{\varphi_t}^2$. Therefore, $S_2=0$ and $A_2=0$. Since $\nabla F$ is $C^\infty$, its gradient flow provides a geodesic foliation for $M$. Moreover, the level sets of $g$ are orthogonal to $\nabla F$. A computation using the Hamilton-Jacobi equation shows that
\begin{equation}\label{ng}
\nabla g =\frac{\left<\nabla g,\nabla F\right>}{g^{\frac{2n-2}{n-2}}} \nabla F.
\end{equation}
and
\begin{equation}\label{nf}
\nabla F^2(v_1(t))=\frac{n-1}{n-2}g(\varphi_t)^{\frac{1}{n-2}}\nabla g(\varphi_t).
\end{equation}
It follows that
\[
S_1(t)=\frac{n-1}{n-2}g(\varphi_t)^{\frac{1}{n-2}}\left<\nabla g(\varphi_t),v_1(t)\right>.
\]

The function $F$ is harmonic. It follows that
\[
S_3(t)=-\frac{1}{n-2}g(\varphi_t)^{\frac{1}{n-2}}\left<\nabla g(\varphi_t),v_1(t)\right>I.
\]

The equation for $S_3$ becomes
\begin{equation}\label{newS3}
\begin{split}
&0=\dot a I+a^2I+R_3(t)-W_3(t).
\end{split}
\end{equation}

Since $\nabla g$ is a multiple of $\nabla F$, $W_3(t)$ is a multiple of identity and so is $R_3(t)$. Therefore, $R_3(t)=-\frac{\lambda}{n-2}|\nabla F|^2_{\varphi_t}I$ and

For all $i,j\neq 1$ and $i\neq j$,
\[
\begin{split}
&\nabla^2 g(v_i(t),v_j(t))=\frac{\left<\nabla g(\varphi_t),v_1(t)\right>}{g(\varphi_t)^{\frac{n-1}{n-2}}}\nabla^2 F(v_i(t),v_j(t)) \\
&=-\frac{\left<\nabla g(\varphi_t),v_1(t)\right>^2}{(n-2)g(\varphi_t)}\delta_{ij}
\end{split}
\]
by (\ref{ng}). Therefore,
\[
\nabla^2g(v_1(t),v_1(t))=-\lambda g(\varphi_t)+\frac{(n-1)\left<\nabla g(\varphi_t),v_1(t)\right>^2}{(n-2)g(\varphi_t)}.
\]

Next, let $\dot c(t)=g(\varphi_{c(t)})^{-\frac{n-1}{n-2}}$. The flow lines of $\varphi_{c(t)}$ are unit speed geodesics tangent to $\nabla F$. A computation using (\ref{nf}) shows that
\[
\begin{split}
&\frac{d}{dt}\log g(\varphi_{c(t)})=\left<\nabla \log g(\varphi_{c(t)}),v_1(c(t))\right>\\
&=\frac{\left<\nabla g(\varphi_{c(t)}),v_1(c(t))\right>}{g(\varphi_{c(t)})}
\end{split}
\]
and
\[
\begin{split}
&\frac{d^2}{dt^2}\log g(\varphi_{c(t)})\\
&=\frac{\left<\nabla^2 g(\varphi_{c(t)})v_1(t),v_1(t)\right>}{g(\varphi_{c(t)})}-\frac{\left<\nabla g(\varphi_{c(t)}),v_1(t)\right>^2}{g(\varphi_{c(t)})^2}\\
&=-\lambda +\frac{\left<\nabla g(\varphi_{c(t)}),v_1(t)\right>^2}{(n-2)g(\varphi_{c(t)})^2}\\
&=-\lambda +\frac{1}{n-2}\left(\frac{d}{dt}\log g(\varphi_{c(t)})\right)^2
\end{split}
\]

Using the method in \cite{Lev}, this can be solved explicitly. The fundamental solution $M(t)$ of
\[
\dot x=\left(
\begin{array}{cc}
0 & -\lambda\\
-\frac{1}{n-2} & 0
\end{array}\right)x.
\]
is given by
\[
M(t)=\left(
\begin{array}{cc}
\cosh\left(t\sqrt{\frac{\lambda}{n-2}}\right) & -\sqrt{\lambda(n-2)}\sinh\left(\sqrt{\frac{\lambda}{n-2}}t\right)\\
-\sqrt{\frac{1}{\lambda(n-2)}}\sinh\left(t\sqrt{\frac{\lambda}{n-2}}\right) & \cosh\left(t\sqrt{\frac{\lambda}{n-2}}\right)
\end{array}\right).
\]

It follows that
\[
\begin{split}
&\frac{d}{dt}\log g(\varphi_{c(t)})\\
&=\frac{d}{dt}\log\left(-\sqrt{\frac{1}{\lambda(n-2)}}\sinh\left(t\sqrt{\frac{\lambda}{n-2}}\right)b(0)+\cosh\left(t\sqrt{\frac{\lambda}{n-2}}\right)\right)^{-(n-2)}
\end{split}
\]
and so
\[
\begin{split}
&g(\varphi_{c(t)}(x))\\
&=g(x)\left(\cosh\left(t\sqrt{\frac{\lambda}{n-2}}\right)-\sqrt{\frac{1}{\lambda(n-2)}}\sinh\left(t\sqrt{\frac{\lambda}{n-2}}\right)|\nabla\log g|_x\right)^{-(n-2)}.
\end{split}
\]

Since $g$ is positive everywhere, $\lambda(n-2)\geq c^2$. Note that since $\nabla^2 g(\nabla F)=0$, $|\nabla g|$ is constant on level sets of $g$.

Finally recall that
\[
\dot B(t)=B(t)S(t),\quad B(0)=I,
\]
where $B(t)$ is the matrix representation of the derivative of the map $\varphi_t$.

Since
\[
S(t)=\left(
\begin{array}{cc}
\frac{n-1}{n-2}\left<\nabla(\log g),\nabla F\right>_{\varphi_t} & 0\\
0 & -\frac{1}{n-2}\left<\nabla(\log g),\nabla F\right>_{\varphi_t}I_{n-1},
\end{array}
\right)
\]
it follows that
\[
B(t)=\left(
\begin{array}{cc}
\frac{g(\varphi_t)^{\frac{n-1}{n-2}}}{g(x)^{\frac{n-1}{n-2}}} & 0\\
0 & \frac{g(x)^{\frac{1}{n-2}}}{g(\varphi_t)^{\frac{1}{n-2}}}I_{n-1}
\end{array}
\right).
\]
Therefore, $(t,y)\mapsto \varphi_{c(t)}(y)$ defines an isometry from $\mathbb{R}\times g^{-1}(g(x))$ equipped with the wrap product metric
\[
dt^2+\left(\cosh\left(t\sqrt{\frac{\lambda}{n-2}}\right)-\sqrt{\frac{c^2}{\lambda(n-2)}}\sinh\left(t\sqrt{\frac{\lambda}{n-2}}\right)\right)^2g_N
\]
to $M$.

The case $c=0$ does not satisfy the assumptions of the theorem. For other $c$'s, by a tranlation in time $t$, it reduces to the case $c=1$.

\smallskip

\section{Proof of Theorem \ref{main0}}

In this section, we give the proof of Theorem \ref{main0} which is very similar and simpler than the one of Theorem \ref{main}.

Let $\gamma:\mathbb{R}\to M$ be the path given in the statement of Theore \ref{main0}. Let $F_t:M\to\mathbb{R}$ be a family of functions defined by
\[
F_t(x)=\inf_{\sigma(0)=x,\sigma(t)=\gamma(-t)}\int_0^tL(\sigma(s),\dot\sigma(s))ds.
\]

By the assumptions, $F_t$ is bounded both above and below by a constant independent of $t$. Moreover, $F_t(x)$ is locally semi-concave (see \cite{Vi}) and $\nabla F_t(x)=-\dot\gamma_t(0)$ whenever $F_t$ is differentiable at $x$, where $\gamma_t$ is a minimizer of the above minimization problem.

By the conservation of the Hamiltonian,
\[
\frac{1}{2}|\dot\gamma_t(s)|^2-V(\gamma_t(s))=C_t
\]
for some constant $C_t$ depending only on $t$. Since
\[
\int_0^tL(\sigma(s),\dot\sigma(s))ds\geq |C_t|t
\]
is bounded above, it follows that $C_t\to 0$ as $t\to\infty$. In particular, the Lipschitz constants of $F_t$ can be chosen independent of $t$ on compact subsets. Therefore, a subsequence of $F_t$ converges to a locally Lipschitz function $F$. Moreover, by taking a further subsequence, we can also assume that $\gamma_t$ converges locally uniformly to a curve $\gamma:[0,\infty)\to M$ such that
\[
\frac{1}{2}|\dot\gamma|^2=V(\gamma).
\]

Let $x$ be a point where $F$ is differentiable and let $\gamma$ be the corresponding minimizer defined above. Let $\sigma_\epsilon$ be a compactly supported variation of $\gamma$ near the point $x$. Let $v=\frac{d}{d\e}\sigma_\e(0)\Big|_{\e=0}$.
\[
\begin{split}
&\left<\nabla F(x),v\right>\geq \frac{d}{d\e}\Big|_{\e=0}\int_0^\infty L(\sigma_\e(s),\dot\sigma_\e(s))ds\\
&=\int_0^\infty \left<\dot\sigma_\e(s),\frac{D}{d\e}\dot\sigma_\e(s)\right> +\left<\nabla V(\sigma_\e),\frac{D}{d\e}\sigma_\e(s)\right>ds\Big|_{\e=0}\\
&=\int_0^\infty \frac{d}{dt}\left<\dot\sigma_\e(s),\frac{D}{d\e}\Big|_{\e=0}\sigma_\e(s)\right>ds\\
&=-\left<\dot\gamma(0),v\right>
\end{split}
\]
The vector $v$ is arbitrary. It follows that $\nabla F(x)=-\dot\gamma(0)$ and so $\frac{1}{2}|\nabla F(x)|^2=V(x)$.

Next, we show that $S_tf=f$. For this, let $\gamma:[0,\infty)\to M$ be a curve such that $\gamma(0)=x$ and $f(x)=\int_0^\infty L(\gamma(s),\dot\gamma(s))ds$. It follows that
\[
\begin{split}
&f(x)=\int_t^\infty L(\gamma(s),\dot\gamma(s))ds+\int_0^t L(\gamma(s),\dot\gamma(s))ds\\
&=f(\gamma(t))+\int_0^t L(\gamma(s),\dot\gamma(s))ds\geq S_tf(x).
\end{split}
\]
For the reverse inequality, let $\gamma_1:[0,t]\to M$ be the minimizer of (\ref{LO}) with $T$ replaced by $t$. Let $\gamma_2:[0,\infty)\to M$ be a curve which satisfy $\gamma_2(0)=\gamma_1(0)$ and
\[
f(\gamma_1(0))=\int_0^\infty L(\gamma_2(t),\dot\gamma_2(t))dt.
\]
It follows that
\[
\begin{split}
&S_tf(x)=f(\gamma_1(0))+\int_0^tL(\gamma_1(s),\dot\gamma_1(s))ds\\
&=\int_0^\infty L(\gamma_2(t),\dot\gamma_2(t))dt+\int_0^tL(\gamma_1(s),\dot\gamma_1(s))ds.
\end{split}
\]
Recall that $\gamma_2$ is defined as a limit of a family of minimizers connecting $\gamma_1(0)$ and $\gamma(-t)$. It follows that
\[
\begin{split}
&S_tf(x)=f(\gamma_1(0))+\int_0^tL(\gamma_1(s),\dot\gamma_1(s))ds\\
&=\int_0^\infty L(\gamma_2(t),\dot\gamma_2(t))dt+\int_0^tL(\gamma_1(s),\dot\gamma_1(s))ds \geq f(x).
\end{split}
\]

It also follows that $F$ is a viscosity solution of the above Hamilton-Jacobi equation. A similar construction with $\gamma(-t)$ replaced by $\gamma(t)$ yields another solution $G$. By Theorem \ref{superharmonic}, both $F$ and $G$ are super-harmonic. Moreover,
\[
F_t(y)+G_t(y)-\int_{-t}^tL(\gamma(s),\dot\gamma(s))ds\geq 0
\]
since $\gamma$ is a minimizer between $\gamma(-t)$ and $\gamma(t)$. Therefore,
\[
F(y)+G(y)-\int_{\mathbb{R}}L(\gamma(s),\dot\gamma(s))ds\geq 0.
\]
On the other hand, $F(x)+G(x)-\int_{\mathbb{R}}L(\gamma(s),\dot\gamma(s))ds=0$. Therefore, $F$ is harmonic. The rest follows as in the proof of Theorem \ref{main}.

\smallskip

\end{document}